\DeclarePairedDelimiter\ceil{\lceil}{\rceil}
\DeclarePairedDelimiter\floor{\lfloor}{\rfloor}
\newtheorem{theorem}{Theorem}[section]
\newtheorem{lemma}{Lemma}[section]
\begin{document}
\title{On the distance spectrum of minimal cages and associated distance biregular graphs}

\author{Aditi Howlader and Pratima Panigrahi
 \\ \small Department of Mathematics, Indian Institute of Technology Kharagpur, India\\ \small e-mail: aditihowlader21@gmail.com, pratima@maths.iitkgp.ernet.in}

\maketitle
\begin{abstract}
A $(k,g)$-cage is a $k$-regular simple graph of girth $g$ with minimum possible number of vertices. In this paper, $(k,g)$-cages which are Moore graphs are referred as minimal $(k,g)$-cages. A simple connected graph is called distance regular(DR) if all its vertices have the same intersection array. A bipartite graph is called distance biregular(DBR) if all the vertices of the same partite set admit the same intersection array. It is known that minimal $(k,g)$-cages are DR graphs and their subdivisions are DBR graphs. In this paper, for minimal $(k,g)$-cages we give a formula for distance spectral radius in terms of $k$ and $g$, and also determine polynomials of degree $\floor{\frac{g}{2}}$, which is the diameter of the graph. This polynomial gives all distance eigenvalues when the variable is substituted by adjacency eigenvalues. We show that a minimal $(k,g)$-cage of diameter $d$ has $d+1$ distinct distance eigenvalues, and this partially answers a problem posed in \cite{Atik1}. We prove that every DBR graph is a $2$-partitioned transmission regular graph and then give a formula for its distance spectral radius. By this formula we obtain the distance spectral radius of subdivision of minimal $(k,g)$-cages. Finally we determine the full distance spectrum of subdivision of some minimal $(k,g)$-cages.\\
\textbf{Keywords}: Distance spectrum; Distance regular graph; Distance biregular graph; Minimal $(k,g)$-cage; Subdivision graph; $k$-partitioned transmission regular graph.\\
\textbf{Subclass}: $05C12,~ 05C50$ 
\end{abstract}
\section{Introduction and Preliminaries}

In this article, by a graph we mean a finite, simple, connected and undirected graph. Let $G=(V,E)$ be a graph with vertex set $V(G)=\{v_{1},v_{2},\ldots,v_{n}\}$ and edge set $E(G)=\{e_{1}, e_{2},\ldots, e_{m}\}$. The \emph{adjacency matrix} $A(G)$ of $G$ is an ${n\times n}$ matrix with $(i,j)^{th}$ entry $1$ or $0$ according as $v_{i}$ is adjacent to $v_{j}$ or not. 
The \emph{incidence matrix} $R(G)$ of $G$ is an ${n\times m}$ matrix whose  $(i,j)^{th}$ entry is $1$ or $0$ according as vertex $v_{i}$ is an end vertex of edge $e_{j}$ or not.
The \emph{distance matrix} $D(G)$  of $G$ is an ${n \times n}$ matrix whose  $(i,j)^{th}$ entry is the distance (length of the shortest path) between the vertices $v_{i}$ and $v_{j}$.
The eigenvalues of $A(G)$ (respectively $D(G)$) are called eigenvalues (respectively distance eigenvalues or D-eigenvalues) of $G$. 
The set of all eigenvalues (respectively distance eigenvalues) of $G$ is called the spectrum (respectively distance spectrum or D-spectrum) of $G$.
If $\lambda_{1}, \lambda_{2},\ldots,\lambda_{p}$ are distinct eigenvalues (respectively distance eigenvalues) of $G$ with respective multiplicities $m_1, m_2,\ldots,m_p$ then the spectrum (respectively distance spectrum) of $G$ is denoted by $\{\lambda_{1}^{(m_1)}, \lambda_{2}^{(m_2)},\ldots,\lambda_{p}^{(m_p)}\}$. 
The largest eigenvalue of $D(G)$ is called the \emph{distance spectral radius} of $G$. \\

\vspace{-1em}
 The distance matrix of a graph gives several structural information of the graph. Thus the computation of the distance matrix and its characteristic polynomial is much more intense problem. Graham and Pollack \cite{Gra} introduced distance matrix of a graph and established a relationship between the number of negative eigenvalues of this matrix and addressed a problem in data communication systems. The distance matrix and distance spectrum of a graph has numerous applications to chemistry \cite{Bala} and other branches of science and engineering. For some recent results on the characteristic polynomials of the distance matrices and distance spectra of graphs, one may refer \cite{Aou,Ali,Lin}. \\

\vspace{-1em}
 For any graph $G$ of diameter $d$, and a vertex $u\in V(G)$, $G_{i}(u)$ denotes the set of all vertices in $G$ of distance $i$ from  $u$, $i = 0, 1,\ldots, d$. A connected graph $G$ is called \emph{distance regular} (in short DR) if it is regular and for any two vertices $x, y \in G$ at distance $i$, there are precisely $c_{i}$ neighbors of $y$ in $G_{i-1}(x)$ and $b_{i}$ neighbors of $y$ in $G_{i+1}(x)$, $0\leq i\leq d$, where $c_{0}$ and $b_{d}$ are undefined. The sequence $\{b_{0}, b_{1},\ldots, b_{d-1}; c_{1}, c_{2},\ldots, c_{d}\}$ is called the \emph{intersection array} of a distance regular graph $G$. 
 For $i = 0, 1,\ldots, d$, the numbers $c_{i}$, $b_{i}$, and $a_{i}$, where $a_{i} = l-b_{i}-c_{i}$ and $l$ is the degree of regularity of $G$, are called the \emph{intersection numbers} of $G$. Biggs \cite{Big} introduced distance regular (DR) graphs. 
 For results on DR graphs and their link with other combinatorial structures one may refer \cite{BroDR,van}. Every DR-graph of diameter $d$ has exactly $d+1$ distinct adjacency eigenvalues and at most $d+1$ distinct D-eigenvalue \cite{Atik1}. The authors in \cite{Ala} characterized some DR graphs with diameter three and four having exactly three distinct distance eigenvalues.\\
 
\vspace{-1em}
 For an $n$-vertex graph $G$ with diameter $d$, the \emph{$i^{th}$ distance matrix} $A_{i}$, $i=1,2,\ldots,d$, of $G$ is an $n\times n$ matrix whose rows and columns are indexed by vertices of $G$ and $(j,m)^{th}$ entry is $1$ or $0$ according as distance between $j^{th}$ and $m^{th}$ vertices is $i$ or not. Thus the distance matrix $D$ of graph $G$ can be written as
 \begin{equation}
 D=A_{1}+2A_{2}+\cdots+d A_{d}
 \label{eq1}
\end{equation}
The adjacency matrix $A$ of a distance regular graph $G$ with diameter $d$ and its $i^{th}$ distance matrices satisfy the following recurrence relation \cite{BroDR}.
\begin{equation}
AA_{i}=c_{i+1}A_{i+1}+a_{i}A_{i}+b_{i-1} A_{i-1},~~A_{0}=I,~~A_{1}=A, ~~i=0,1,2,\ldots,d
\label{eq2}
\end{equation}
Applying equation (\ref{eq2}) we get that the $i^{th}$ distance matrix $A_{i}$ of a distance regular graph $G$ with diameter $d$ can be expressed as a polynomial (of degree $i$) of its adjacency matrix $A$, $i=1,2,\ldots,d$. Then from (\ref{eq1}), the distance matrix $D$  can also be written as a polynomial of $A$, say $D=p(A)$, of degree $d$. Thus for every eigenvalue $\lambda$ of $A$, $p(\lambda)$ is a distance eigenvalue of graph $G$.\\

 A connected graph $G$ is called \emph{distance-biregular (DBR) graph} if it is bipartite and all vertices in the same partite set have the same intersection array. We denote the bi-partition of a DBR graph as $(V_{1},V_{2})$. The intersection arrays for vertices in $V_1$ and $V_2$ are  $\{r, e_{1},\ldots, e_{d_{1}-1}; 1, f_{2},\ldots, f_{d_{1}}\}$ and  $\{s, g_{1},\ldots, g_{d_{2}-1}; 1, h_{2},\ldots, h_{d_{2}}\}$ respectively, where $r$ is the degree of vertices in $V_1$, $s$ is the degree of vertices in $V_{2}$, $d_{1}=$max $\{d(x,y):x\in V_{1},y\in V(G)\}$ and $d_{2}=$max $\{d(x,y):x\in V_{2},y\in V(G)\}$. The diameter $d^{\prime}$ of $G$ is of course $max(d_{1}, d_{2})$. For any $u \in V_1$ and $v \in V_2$ we take $l_{i}=|G_{i}(u)|$ and  $l_{i}^{\prime}=|G_{i}(v)|$, $i=0,\ldots,d^{\prime}$. We note that $l_{d^{\prime}-1}\neq 0$ and $l_{d^{\prime}-1}^{\prime}\neq 0$ though one of $l_{d^{\prime}}$ and $l_{d^{\prime}}^{\prime}$ may be zero.

Some elementary relations on the intersection arrays of a DBR graph are given below.
\lemma \rm (\cite{God}) \label{ldbr}
For a DBR graph, the following relations hold true:
    $l_{0}=1$, $l_{i+1}f_{i+1}=l_{i}e_{i}$, $l_{0}^{\prime}=1$, and $l_{i+1}^{\prime} h_{i+1}=l_{i}^{\prime} g_{i}$.\\

For any graph $G$ and a vertex $v$ in it, the \emph{transmission} $Tr_{G}(v)$ of $v$ is the sum of distances from $v$ to all other vertices in $G$. A connected graph $G$ is called \emph{$p$-transmission regular} if $Tr_{G}(v)$ is $p$ for all the vertices $v$ in $G$.
It is known \cite{BroDR} that for any vertex $u$ in a DR graph $G$, $G_{i}(u)$ has a constant number of vertices, say $k_{i}$, $i =0,1,\ldots,d$. Also $k_{i}$ satisfies the relations
$k_{0} = 1,~ k_{1} = l,~ k_{i+1} c_{i+1} = k_{i}b_{i}$ for $i=0,1,\ldots, d-1$. Thus any DR graph is a $p$-transmission regular graph, where $p =\sum_{i=0}^{d} i k_{i}$.  We note that the distance spectral radius of every $p$-transmission regular graph is equal to $p$.

\definition \rm \cite{Bro}
Suppose $A$ is a real symmetric matrix whose rows and columns are indexed by elements in $X=\{1,2,\ldots,n\}$. Consider the block representation of $A$ with respect to the partition $\{X_{1},X_{2},\ldots,X_{m}\}$ of $X$ as $A$=
 $\begin{pmatrix}
  A_{11} & A_{12} &\cdots& A_{1m} \\
  A_{21} & A_{22} &\cdots& A_{2m} \\
  \cdots & \cdots &\cdots& \cdots  \\
  A_{m1} & A_{m2} &\cdots& A_{mm}
  \end{pmatrix} $,
  where each $A_{ij}$ denotes the sub-matrix (block) of $A$ formed by rows indexed in $X_{i}$ and the columns indexed in $X_{j}$. Let $q_{ij}$ be the average row sum of $A_{ij}$. Then the matrix $Q=(q_{ij})$ is called a \emph{quotient matrix} of $A$. For each block $A_{ij}$, if the row sum is constant then the partition is called \emph{equitable}.
 \lemma \rm (\cite{Bro}) \label{q1}
 Let $Q$ be a quotient matrix of a real symmetric matrix $A$ corresponding to an equitable partition. Then the spectrum of $A$ contains the spectrum of $Q$.
 
 \lemma \rm (\cite{Atik}) \label{q2}
 If $Q$ is a quotient matrix of a real symmetric matrix $A$ corresponding to an equitable partition, then the largest eigenvalue of $A$ is equal to the largest eigenvalue of $Q$.
 
A connected graph $G$ is called a \emph{$t$-partitioned transmission regular graph} if there exists a partition $\bigcup_{i=1}^{t} U_{i}$ (called a $t$-partition) of the vertex set of $G$ such that for any $i,j$ (not necessarily distinct) in  $\{1,2,\ldots,t\}$ and for any vertex $x\in U_{i}$, $q_{ij}=\sum_{y\in U_{j}} d(x,y)$ is a constant, where $d(x,y)$ is the distance between $x$ and $y$ in the graph $G$. For a $t$-partitioned transmission regular graph $G$, $\{U_{i}: i= 1,2,\dots,t\}$ is an equitable partition of $D(G)$.
Therefore the quotient matrix of $D(G)$ with respect to this partition is $Q^{D}=[q_{ij}]_{t\times t}$, and so by Lemma \ref{q2} the distance spectral radius of $G$ is the largest eigenvalue of $Q^{D}$.\\

\vspace{-1em}
 For positive integers $k$ and $g$, a $(k,g)$-cage  is a $k$-regular simple graph of girth $g$ on minimum possible number, say $n(k,g)$, of vertices. An well known \cite{Big} lower bound for $n(k,g)$ is as given below.
{\small{\begin{align*}
n(k,g)
&\geq n_{0}(k,g)=
\begin{cases}
1+k+k(k-1)+\cdots+k(k-1)^{d-2}+k(k-1)^{d-1}, &\text{if $g$ is odd}\\
   1+k+k(k-1)+\cdots+k(k-1)^{d-2}+(k-1)^{d-1}, &\text{if $g$ is even},
   \end{cases}
\end{align*}}}
where $d=\floor{\frac{g}{2}}$ is the diameter of the $(k,g)$-cage. A $(k,g)$-cage for which equality holds in the above bound is called a \emph{Moore graph} or a \emph{minimal $(k,g)$-cage.}\\

\vspace{-1em}
The Lemma below gives information about the possible minimal $(k,g)$-cages.
\lemma \rm (\cite{Exo}) \label{lexist}
There exists a Moore graph (or a minimal $(k,g)$-cage) of degree $k$ and girth $g$ if and only if\\
$(i)$ $k= 2$ and $g>3$, cycles;\\
$(ii)$ $g= 3$ and $k>2$, complete graphs;\\
$(iii)$ $g= 4$ and $k>2$, complete bipartite graphs;\\
$(iv)$ $g= 5$ and:\\
$~~~~~~~~~k=2$, the $5$-cycle,\\
$~~~~~~~~~k=3$, the Petersen graph,\\
$~~~~~~~~~k=7$, the Hoffman-Singleton graph,and possibly $k= 57$;\\
      $(v)$ $g= 6,8$, or $12$, and there exists a symmetric generalized $n$-gon of order $k-1$.
\\

\vspace{-1em}
It is known \cite{Big} that every minimal $(k,g)$-cage is a DR graph with intersection array
$\{k ,k-1,\ldots,k-1,k-1;1 ,1 ,\ldots,1, k\}$ if $g$ is even, and
$\{k ,k-1,\ldots,k-1,k-1;1 ,1 ,\ldots,1, 1\}$ if $g$ is odd. So the intersection number $a_i=0$ for all minimal $(k,g)$-cages, $i=0,1,\ldots,d$. The \emph{subdivision graph $S(G)$} of the graph $G$ is obtained from $G$ by inserting a new vertex of degree $2$ on each edge of $G$.\\

\vspace{-1em}
The result below gives adjacency spectrum of minimal $(k,g)$-cages. 
\lemma \rm (\cite{Big}) \label{lspec}
Let $G$ be a $(k,g)$-cage with diameter $d$ and $n$ vertices.\\
$(i)$ If $g = 2d$ then the $d+1$ distinct eigenvalues of $G$ are
$\lambda =  k,-k,2 \sqrt{k-1} \cos \frac{\pi j}{d}$, $j =1,2,\ldots, d-1$, with multiplicity  $m_{\lambda}=\frac{nk}{g}[\frac{4h-\lambda^{2}}{k^{2}-\lambda^{2}}]$, $h=k-1$, $|\lambda|\neq k$.\\
$(ii)$ If $g = 2d + 1$ then the $d+1$ distinct eigenvalues of $G$ are
$\lambda = k, 2 \sqrt{k-1} \cos a_{j}$, $j = 1,2,\ldots,d$,
where  $a_{1},\ldots,a_{d}$ are the distinct solutions in the interval $0<a<\pi$
of the equation $\sqrt{k-1} \sin (d+1)a + \sin d a = 0$ with multiplicity of an eigenvalue $\lambda$ is given by $m_{\lambda} =\frac{nk}{g}[\frac{4h-\lambda^{2}}{(k-\lambda)(f+\lambda)}]$, $h=k-1,~f=k+\frac{k-2}{g}$.\\

\vspace{-1em}
 In this paper, for minimal $(k,g)$-cages we give a formula for distance spectral radius in terms of $k$ and $g$, and also determine polynomials of degree $\floor{\frac{g}{2}}$ which give all distance eigenvalues when the variable is substituted by adjacency eigenvalues. The authors in \cite{Atik1} proved that every DR graph with diameter $d$ has at the most $d + 1$ distinct D-eigenvalues and then asked for  characterization of DR graphs which will have exactly $d + 1$ distinct D-eigenvalues. We show that all minimal $(k,g)$-cages of diameter $d$ have $d+1$ distinct distance eigenvalues. In \cite{God} it is proved that every distance-regularized graph is either DR or DBR. 
The authors in \cite{Mohar} proved that subdivision of a minimal $(k,g)$-cage is a DBR graph. We prove that every DBR-graph is a $2$-partitioned transmission regular graph and then give a formula for its distance spectral radius. By this formula we determine distance spectral radius of subdivision of minimal $(k,g)$-cages. We also find D-spectrum of subdivision of minimal $(3,5)$-cages, minimal $(3,6)$-cages, and minimal $(k,g)$-cages for $g=3$ and $4$ with any values of $k \geq 3$.\\
 
\vspace{-1em}
 Next we state some known results which will be used in the sequel.
 
\definition \rm \cite{Sc}
Let $A=(a_{ij})$ be an $m\times n$ matrix and $B=(b_{ij})$ be a $p\times q$ matrix then the \emph{Kronecker product} of $A$ and $B$, denoted by $A\otimes B$, is defined as the $mp\times nq$ partition matrix $(a_{ij}~B)$. The product of two kronecker products gives another kronecker product: $(M\otimes P)(N\otimes Q)=MN\otimes PQ$, in case where each multiplication makes sense.\\
Recall that for any graph $G$, its line graph $L(G)$ is the graph whose vertex set is $E(G)$ and two vertices are adjacent if the corresponding edges in $G$ share a common end vertex.

 \lemma \rm (\cite{Cve}) \label{l1}
Let $G$ be an $r$-regular graph with adjacency matrix $A$, incidence matrix $R$, and line graph $L(G)$. Then $RR^{T} = A+rI$, $R^{T}R = A(L(G))+2I$, $JR = 2J = R^{T} J$ and $JR^{T} = r J = R J$, where $I$ is the identity matrix and $J$ is the all-one matrix of appropriate order.

\lemma \rm (\cite{Cve}) \label{l2}
Let G be an $r$-regular graph with $p$ vertices, $q$  edges, and eigenvalues $\{r,~ \lambda_{2},~\ldots,~\lambda_{p}\}$. Then spectrum of $L(G)$ is $\{2r-2,~ \lambda_{2}+r-2,~ \ldots,~ \lambda_{p}+r-2,~ -2^{(q-p)}\}$.
Also, $Z$ is an eigenvector corresponding to the eigenvalue $-2$ if and only if $RZ = 0$, where $R$ is the incidence matrix of $G$.

\section{\textbf{Distance spectrum of minimal cages}}
Here we give a formula for distance spectral radius of minimal $(k,g)$-cages.
\begin{theorem}
The distance spectral radius of a minimal $(k,g)$-cage, $k \geq 3$, is\\
$~~~~~~~~~~~~~~~~~~\lambda_{1}=\begin{cases}
\frac{k\{1-(k-1)^{d}\}}{(2-k)^{2}}-\frac{2d(k-1)^{d}}{(2-k)},& \text{$g$ even}\\
\frac{k\{1-(k-1)^{d}\}}{(2-k)^{2}}-\frac{d k(k-1)^{d}}{(2-k)},& \text{$g$ odd}
\end{cases}
$, where $d=\floor{\frac{g}{2}}$.
\end{theorem}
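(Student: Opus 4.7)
The plan is to exploit the fact, stated in the preliminaries, that a minimal $(k,g)$-cage $G$ is distance regular and hence $p$-transmission regular with $p=\sum_{i=0}^{d}i\,k_{i}$, where $k_{i}=|G_{i}(u)|$ is independent of the vertex $u$. Since the distance spectral radius of a $p$-transmission regular graph equals $p$, the whole proof reduces to computing this single sum and massaging it into the stated closed form.

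First I would read off the $k_{i}$ from the intersection arrays recalled just before Lemma~\ref{lspec}, using the recursion $k_{0}=1$ and $k_{i+1}c_{i+1}=k_{i}b_{i}$. For odd $g=2d+1$ all $c_{i}=1$ and $b_{i}=k-1$ (except $b_{d}$), giving $k_{i}=k(k-1)^{i-1}$ for $1\le i\le d$. For even $g=2d$ the same recursion yields $k_{i}=k(k-1)^{i-1}$ for $1\le i\le d-1$, while $c_{d}=k$ forces $k_{d}=(k-1)^{d-1}$. (Both are of course consistent with the Moore bound $n_{0}(k,g)$.)

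Next I would evaluate the resulting finite sums by the standard trick of differentiating a geometric series: with $x=k-1$,
\begin{equation*}
\sum_{i=1}^{n}i\,x^{i-1}=\frac{d}{dx}\frac{x^{n+1}-1}{x-1}=\frac{n x^{n+1}-(n+1)x^{n}+1}{(x-1)^{2}}.
\end{equation*}
Applying this with $n=d$ for odd $g$, and with $n=d-1$ plus the extra term $d(k-1)^{d-1}$ for even $g$, produces $p$ as a single rational expression in $k$ and $d$ over $(k-2)^{2}$.

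The only real work is then the algebraic rearrangement to present $p$ in the form given in the statement, i.e.\ to factor the numerator so that the first summand becomes $k\{1-(k-1)^{d}\}/(2-k)^{2}$ and the remainder collapses to $-2d(k-1)^{d}/(2-k)$ or $-dk(k-1)^{d}/(2-k)$ respectively. This is the main (though purely mechanical) obstacle: one has to check, in the even case, that the contribution $d(k-1)^{d-1}$ combines with the terms $-kd(k-1)^{d-1}$ and $k(d-1)(k-1)^{d}$ from the geometric-series formula via the identity $(k-2)^{2}-k=(k-1)(k-4)$ to yield precisely $(k-1)^{d}[(2d-1)k-4d]$, which matches $k-(k-1)^{d}\cdot k+2d(k-2)(k-1)^{d}$ after factoring. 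The odd case is analogous but simpler since no boundary correction to $k_{d}$ is needed. Verifying these two identities finishes the proof.
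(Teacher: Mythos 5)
Your proposal is correct and follows essentially the same route as the paper: both invoke the fact that a minimal $(k,g)$-cage, being distance regular, is $p$-transmission regular with $p=\sum_{i=0}^{d} i\,k_{i}$, read off the $k_i$ from the intersection array (with the boundary correction $k_d=(k-1)^{d-1}$ when $g$ is even), and then evaluate the resulting arithmetico-geometric sum. The only cosmetic difference is that you close the sum by differentiating a geometric series while the paper uses the shift trick $S-(k-1)S$; the final algebraic identities you record check out.
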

\begin{proof}
Since a minimal $(k,g)$-cage is a DR graph which is also a $p$-transmission regular graph, the distance spectral radius of this graph is the transmission $p$ of any vertex $x$ in it. From the intersection array of the minimal $(k,g)$-cage we get, $p=\sum_{y\in G} d(x,y)=  k+2k(k-1)+3k(k-1)^{2}+\cdots+(d-1)k(k-1)^{d-2}+dc(k-1)^{d-1}$, where $c=1$ for $g$ even and $c=k$ for $g$ odd. For $g$ even, $p=k+2k(k-1)+3k(k-1)^{2}+\cdots+(d-1)k(k-1)^{d-2}+d(k-1)^{d-1}=k+2k(k-1)+3k(k-1)^{2}+\cdots+(d-1)k(k-1)^{d-2}+d(k-k+1)(k-1)^{d-1}= k[1+2(k-1)+\cdots+d(k-1)^{d-1}]-d(k-1)^{d}=kS-d(k-1)^{d}$, where $S=1+2(k-1)+\cdots+d(k-1)^{d-1}.$ Then we get $S-(k-1)S=[1+(k-1)+\cdots+(k-1)^{d-1}]-d(k-1)^{d}$. So $S=\frac{1-(k-1)^{d}}{(2-k)^{2}}-\frac{d(k-1)^{d}}{(2-k)}$ and we get the result in this case. If $g$ is odd then $p=k S$, and hence the result.
\end{proof}
The next lemma will be useful to prove some important results of this paper.
\begin{lemma} \label{lemmainteger}
For integers $i$ and $j$, $i,j= 0,1,2,\ldots,d$, consider the recurrence relation
$a_{i}^{j}=
\begin{cases}
a_{i-1}^{j}+a_{i-2}^{j-1}, & \text{if $1\leq j\leq \floor{\frac{i}{2}}$}\\
0, & \text{otherwise}
\end{cases}$,  with initial conditions\\ $a_{i}^{0}=\begin{cases}
k-1, & \text{i=1,\ldots,d}\\
k, & \text{i=0}
\end{cases}$.
Then we get,
\begin{enumerate} [(i)]
    \item for $j>0$, $a_{2j+b}^{j}$=$\begin{cases}
   k, & \text{if $b=0$}\\
   k+a_{2j-1}^{j-1}+a_{2j}^{j-1}+\cdots+a_{2j+b-2}^{j-1}, & \text{if $b>0$},
   \end{cases}$\\
   \item $a_{i}^{j}=
   g_{i}^{j} k-h_{i}^{j}$ for $1\leq j< \floor{\frac{i}{2}}$, $i=4,\ldots,d$,
    where $g_{i}^{j}=1+g_{2j-1}^{j-1}+g_{2j}^{j-1}+\cdots+g_{i-2}^{j-1}$, $h_{i}^{j}=h_{2j-1}^{j-1}+h_{2j}^{j-1}+\cdots+h_{i-2}^{j-1}$, $g_{i}^{1}=i-1$, $h_{i}^{1}=i-2$, and $g_{i}^{0}=h_{i}^{0}=1$.
\end{enumerate}
\end{lemma}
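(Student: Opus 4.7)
The plan is to establish (i) by a double induction directly from the recurrence, and then to use (i) as a lever for proving (ii) by a single induction on $j$, so that the two-parameter formula for $a_i^j$ reduces to assembling weighted sums of already computed terms.

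For part (i), I would first handle $b = 0$ by induction on $j$. Because $j > \lfloor(2j-1)/2\rfloor = j-1$, the value $a_{2j-1}^{\,j}$ vanishes, so the defining recurrence collapses to $a_{2j}^{\,j} = a_{2j-2}^{\,j-1}$; combined with the base case $a_2^{\,1} = a_1^{\,1} + a_0^{\,0} = 0 + k = k$, iteration yields $a_{2j}^{\,j} = k$ for every $j \ge 1$. For $b \ge 1$ I would induct on $b$. The base $b = 1$ is $a_{2j+1}^{\,j} = a_{2j}^{\,j} + a_{2j-1}^{\,j-1} = k + a_{2j-1}^{\,j-1}$, matching the claim. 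For the inductive step, writing $a_{2j+b}^{\,j} = a_{2j+b-1}^{\,j} + a_{2j+b-2}^{\,j-1}$ and substituting the inductive formula for $a_{2j+b-1}^{\,j}$ simply appends the new term $a_{2j+b-2}^{\,j-1}$ to the existing telescoping sum.

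For part (ii) I would induct on $j$. The base $j = 1$ follows from part (i): since $i \ge 4 > 2$, one has $a_i^{\,1} = k + \sum_{m=1}^{i-2} a_m^{\,0} = k + (i-2)(k-1) = (i-1)k - (i-2)$, giving $g_i^{\,1} = i-1$ and $h_i^{\,1} = i-2$ as stated. For $j \ge 2$ with $j < \lfloor i/2 \rfloor$ (equivalently $i \ge 2j+2$), part (i) yields $a_i^{\,j} = k + \sum_{m=2j-1}^{i-2} a_m^{\,j-1}$; plugging the inductive expression $a_m^{\,j-1} = g_m^{\,j-1}\,k - h_m^{\,j-1}$ into each summand immediately reads off the claimed recursions $g_i^{\,j} = 1 + \sum_{m=2j-1}^{i-2} g_m^{\,j-1}$ and $h_i^{\,j} = \sum_{m=2j-1}^{i-2} h_m^{\,j-1}$.

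The main obstacle I anticipate is the boundary summand $a_{2j-1}^{\,j-1}$ at the lower end, whose index satisfies $j-1 = \lfloor(2j-1)/2\rfloor$ and therefore sits on the edge of the range where the $gk-h$ representation is formally defined in (ii). I would handle this by applying part (i) once more in the form $a_{2j-1}^{\,j-1} = k + a_{2j-3}^{\,j-2}$ to peel off one further boundary layer, after which the interior case of the inductive hypothesis (or, when $j = 2$, the explicit initial value $a_1^{\,0} = k-1$) supplies the needed decomposition. Once this boundary bookkeeping is set up, the remainder of the argument is routine index tracking.
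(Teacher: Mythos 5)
Your proposal is correct and follows essentially the same double-induction strategy as the paper: part (i) by induction on $j$ for $b=0$ and then on $b$, and part (ii) by induction on $j$ using (i) to expand $a_i^j$ as $k$ plus a telescoping sum and substituting the $g_m^{j-1}k-h_m^{j-1}$ form into each summand. Your explicit treatment of the boundary summand $a_{2j-1}^{j-1}$ (whose index lies outside the range $j-1<\floor{(2j-1)/2}$ where (ii) is formally stated) is in fact more careful than the paper's proof, which substitutes the inductive hypothesis into that term without comment.
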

\begin{proof}
$(i)$ First, we take $b=0$, do induction on $j$ and show that $a_{2j}^{j}=k$. For $j=0$, $a_0^0=k$, and for $j=1$, we have $a_{2}^{1}=a_{1}^{1}+a_{0}^{0}=k$, as $a_{1}^{1}=0$ from the hypothesis. We assume that the result is true up to $j-1$. Now $a_{2j}^j=a_{2j-1}^{j}+a_{2j-2}^{j-1}=0+k$, since $a_{2j-1}^{j}=0$ from the hypothesis. Hence  $a_{2j}^{j}=k$, for every $j$. Next let $b>0$. For any fixed $j\geq 1$ we do induction on $b$. If $b=1$, $a_{2j+1}^{j}=a_{2j}^{j}+a_{2j-1}^{j-1}=k+a_{2j-1}^{j-1}$. We assume that the equation holds true up to  $b-1$. Now $a_{2j+b}^{j}= a_{2j+b-1}^{j}+a_{2j+b-2}^{j-1}=k+a_{2j-1}^{j-1}+a_{2j}^{j-1}+\cdots+a_{2j+b-3}^{j-1}+a_{2j+b-2}^{j-1}$. This proves the first part of the lemma.
\vspace{2mm} 
 \par For $(ii)$, we do induction on $j$ for any fixed $i\geq 2$. By $(i)$ of this Lemma we get,
$a_{i}^{1}=k+a_{1}^{0}+a_{2}^{0}+\cdots+a_{i-2}^{0}=k+(i-2)(k-1)=(i-1)k-(i-2)=g_{i}^{1}k-h_{i}^{1}$. This proves the result for $j=1$. Let the equation be true upto $j-1$. Now $a_{i}^{j}=k+a_{2j-1}^{j-1}+a_{2j}^{j-1}+\cdots+a_{i-2}^{j-1}=k+(g_{2j-1}^{j-1}k-h_{2j-1}^{j-1})+(g_{2j}^{j-1}k-h_{2j}^{j-1})+\cdots+(g_{i-2}^{j-1}k-h_{i-2}^{j-1})=(1+g_{2j-1}^{j-1}+g_{2j}^{j-1}+\cdots+g_{i-2}^{j-1})k-(h_{2j-1}^{j-1}+h_{2j}^{j-1}+\cdots+h_{i-2}^{j-1})=g_{i}^{j}k-h_{i}^{j}$. Hence the result.
\end{proof}

\begin{theorem} \label{theoremai}
 Let $G$ be a minimal $(k,g)$-cage. The $i^{th}$ distance matrix $A_{i}$, $i=0,1,2,\ldots,d,$ of $G$ can be expressed as:
\begin{align}
A_{i}=&\frac{1}{c}[A^{i}- a_{i}^{1} A^{i-2}+ (k-1)a_{i}^{2} A^{i-4}-(k-1)^{2} a_{i}^{3} A^{i-6}+\cdots+(-1)^{\floor{\frac{i}{2}}} \nonumber\\
&(k-1)^{{\floor{\frac{i}{2}}}-1}a_{i}^{{\floor{\frac{i}{2}}}}A^{i-2{\floor{\frac{i}{2}}}}]
\label{eq3}
\end{align}
where $a_{i}^{j}$ are as in Lemma \ref{lemmainteger}, $c$ is  $k$ for $i=d$ and $g$ even, and is $1$ otherwise.
\end{theorem}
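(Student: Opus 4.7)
The plan is to prove the formula by induction on $i$, exploiting the fact that for a minimal $(k,g)$-cage all intersection numbers $a_i$ vanish and the remaining entries of the intersection array are $b_0=k$, $b_j=k-1$ for $1\le j\le d-1$, $c_j=1$ for $1\le j\le d-1$, and $c_d\in\{1,k\}$ according as $g$ is odd or even. Consequently the distance-regular recurrence (\ref{eq2}) collapses to $A_2=A^2-kI$ together with $A_{i+1}=AA_i-(k-1)A_{i-1}$ for $2\le i\le d-2$; at the top end one has $A_d=AA_{d-1}-(k-1)A_{d-2}$ when $g$ is odd and $kA_d=AA_{d-1}-(k-1)A_{d-2}$ when $g$ is even, and it is precisely the factor $k$ in the latter identity that is the source of the prefactor $c=k$ in (\ref{eq3}).

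I will introduce coefficients $\alpha_i^0=1$ and $\alpha_i^j=(-1)^j(k-1)^{j-1}a_i^j$ for $j\ge 1$, so that (\ref{eq3}) is equivalent to the claim $cA_i=\sum_{j=0}^{\lfloor i/2\rfloor}\alpha_i^j A^{i-2j}$. The base cases $A_0=I$ and $A_1=A$ are immediate, and $A_2=A^2-kI$ is correct because Lemma~\ref{lemmainteger} gives $a_2^1=a_1^1+a_0^0=0+k=k$, using the nonstandard initial value $a_0^0=k$. For the inductive step I would substitute the formulas for $A_{i-1}$ and $A_i$ into $A_{i+1}=AA_i-(k-1)A_{i-1}$ and compare coefficients of $A^{i+1-2j}$ to obtain
\[
\alpha_{i+1}^{j}\;=\;\alpha_i^{j}-(k-1)\alpha_{i-1}^{j-1}.
\]
After unpacking the definitions, this identity is exactly the recurrence $a_{i+1}^j=a_i^j+a_{i-1}^{j-1}$ of Lemma~\ref{lemmainteger}. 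The closing case $i=d$ with $g$ even is treated identically after dividing $kA_d=AA_{d-1}-(k-1)A_{d-2}$ by $k$, which yields (\ref{eq3}) with $c=k$.

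The main obstacle I anticipate is the bookkeeping at the boundary indices. Three points will deserve attention: (i) at $j=0$ the lemma's split initial condition $a_0^0=k$ versus $a_i^0=k-1$ for $i\ge 1$ mirrors the split $b_0=k$ versus $b_j=k-1$ in the cage's intersection array, and is exactly what makes the $j=1$ contribution $(k-1)\alpha_{i-1}^0=k-1$ absorb correctly in the inductive step; (ii) at the top index $j=\lfloor(i+1)/2\rfloor$ the vanishing convention $a_i^j=0$ for $j>\lfloor i/2\rfloor$ must be invoked so that the sum in (\ref{eq3}) truncates at the correct place; and (iii) the single exceptional identity at $i=d$ with $g$ even must be handled separately because $c_d=k$ replaces the generic value $1$. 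Once these three boundary phenomena are correctly accounted for, the remainder is a mechanical matching of coefficients driven by the two-term recurrence and Lemma~\ref{lemmainteger}.
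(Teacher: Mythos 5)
Your proposal is correct and follows essentially the same route as the paper: induction on $i$ using the collapsed three-term recurrence from (\ref{eq2}) (all $a_i=0$, $b_0=k$, $b_j=k-1$, $c_j=1$, with $c_d=k$ when $g$ is even giving the prefactor $1/k$), matching coefficients against the recurrence $a_i^j=a_{i-1}^j+a_{i-2}^{j-1}$ of Lemma~\ref{lemmainteger}. Your substitution $\alpha_i^j=(-1)^j(k-1)^{j-1}a_i^j$ just makes the coefficient bookkeeping slightly more transparent than the paper's direct floor-function case analysis, but the argument is the same.
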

\begin{proof}
We do induction on $i$. First, let $g$ be an odd integer. So intersection array of $G$ is $\{k,k-1,\ldots,k-1;1, 1,\ldots,1\}$. From recurrence relation (\ref{eq2}), we have $ A A_{1}=c_{2}A_{2}+a_{1}A_{1}+b_{0}A_{0}$, $A_{0}=I$ and $A_{1}=A$.
  Since $a_{1}=0,~c_{2}=1,$ and $b_{0}=k$, we get $A^2=A_{2}+k I$. Then $A_{2}= A^{2}-k I=A^{2}-a_{2}^{1}I$. Thus equation (\ref{eq3}) is true for $i=0,1,2$.
 Let us assume that it is true up to $d-1$. Then we consider $i=d$. 
Since $a_{d-1}=0,~c_{d}=1,$ and $b_{d-2}=k-1$, we have
\begin{align*}
 A A_{d-1}=c_{d}A_{d}+a_{d-1}A_{d-1}+b_{d-2}A_{d-2} = A_{d}+(k-1) A_{d-2},
 \end{align*}
 \begin{align*}
 &A( A^{d-1}- a_{d-1}^{1} A^{d-3}+ \cdots+(-1)^{\floor{\frac{d-1}{2}}}(k-1)^{{\floor{\frac{d-1}{2}}}-1} a_{d}^{{\floor{\frac{d-1}{2}}}} A^{d-1-{\floor{\frac{d-1}{2}}}})= A_{d}+\\
&(k-1)(A^{d-2}-a_{d-2}^{1} A^{d-4}
+\cdots+(-1)^{\floor{\frac{d-2}{2}}}(k-1)^{{\floor{\frac{d-2}{2}}}-1} a_{d-2}^{{\floor{\frac{d-2}{2}}}}
A^{d-2-2{\floor{\frac{d-2}{2}}}}).
\end{align*}
 Since for $d$ even, $\floor{\frac{d-2}{2}}= \floor{\frac{d-1}{2}}=\floor{\frac{d}{2}}-1$, we get\\
\begin{equation*}
\begin{split}
A_{d}=&A^{d}-\{a_{d-1}^{1}+(k-1)\} A^{d-2}+(k-1)\{a_{d-1}^{2}+a_{d-2}^{1}\} A^{d-4}-\cdots+
(-1)^{\floor{\frac{d}{2}}}\\
&(k-1)^{{\floor{\frac{d}{2}}-1}}
a_{d-2}^{\floor{\frac{d-2}{2}}}A^{d-2{\floor{\frac{d}{2}}}}\\
=&A^{d}-a_{d}^{1} A^{d-2}+(k-1)a_{d}^{2} A^{d-4}-\cdots+(-1)^{\floor{\frac{d}{2}}}(k-1)^{{\floor{\frac{d}{2}}-1}}a_{d}^{\floor{\frac{d}{2}}}A^{d-2{\floor{\frac{d}{2}}}}.\\
\end{split}
\end{equation*}
For $d$ odd, $\floor{\frac{d-2}{2}}=\floor{\frac{d}{2}}-1$ and  $\floor{\frac{d-1}{2}}=\floor{\frac{d}{2}}$. So we get
\begin{equation*}
\begin{split}
A_{d}= &A^{d}-\{a_{d-1}^{1}+(k-1)\} A^{d-2}+(k-1)\{a_{d-1}^{2}+a_{d-2}^{1}\} A^{d-4}-\cdots+(-1)^{\floor{\frac{d}{2}}}\\
&(k-1)^{{\floor{\frac{d}{2}}-1}}\{a_{d-1}^{\floor{\frac{d}{2}}}+a_{d-2}^{\floor{\frac{d}{2}}-1}\}A^{d-2{\floor{\frac{d}{2}}}}\\
= &A^{d}-a_{d}^{1} A^{d-2}+(k-1)a_{d}^{2} A^{d-4}-\cdots+(-1)^{\floor{\frac{d}{2}}}(k-1)^{{\floor{\frac{d}{2}}-1}}a_{d}^{\floor{\frac{d}{2}}}A^{d-2{\floor{\frac{d}{2}}}}.\\
\end{split}
\end{equation*}
 Hence the result holds true in this case.\\

\vspace{-1em}
Next, we consider that $g$ is even. The result holds true for $i=0,1,2,\ldots,d-1$, because the intersection numbers agree with those in the case that $g$ is odd. Since $c_{d}=k$, by recurrence relation (\ref{eq2}) we get
\begin{equation*}
\begin{split}
&A A_{d-1}=c_{d}A_{d}+a_{d-1}A_{d-1}+b_{d-2}A_{d-2}
 =k A_{d}+(k-1) A_{d-2},\\
&A_{d}= \frac{1}{k}[A^{d}-a_{d}^{1} A^{d-2}+(k-1)a_{d}^{2} A^{d-4}-\cdots+(-1)^{\floor{\frac{d}{2}}}(k-1)^{{\floor{\frac{d}{2}}-1}}a_{d}^{\floor{\frac{d}{2}}}A^{d-2{\floor{\frac{d}{2}}}}].
\end{split}
\end{equation*}
\end{proof}
In the theorem below we find polynomials of degree $\floor{\frac{g}{2}}$ which give all distance eigenvalues of minimal $(k,g)$-cages when the variable is substituted by adjacency eigenvalues.
\begin{theorem} \label{theorempoly}
If $\lambda$ is an eigenvalue of a minimal $(k,g)$-cage $G$  then $p(\lambda)$ is a distance eigenvalue of $G$ with the same multiplicity as that of $\lambda$, where $p(x)$ is given below.\\
{\small{\begin{equation*}
p(x) = \begin{cases}
\sum\limits_{i=0}^{d-1} [i+\sum\limits_{\substack{j=1\\ d-i\geq 3 }}^{\floor{\frac{d-i-1}{2}}} (-1)^{j} (i+2j)(k-1)^{j-1} a_{i+2j}^{j}]x^{i} +
\frac{d}{k}[A^{d}+\\
\sum\limits_{i=1}^{\floor{\frac{d}{2}}} (-1)^{i} (k-1)^{i-1} a_{d}^{i} x^{d-2i}],&\text{$g$ even}\\
x,&\text{for $g=3$}\\
2x^{2}+x-2k,& \text{for $g=5$}
\end{cases}
\end{equation*}}}

where $a_{i}^{j}$ are as in Lemma \ref{lemmainteger}.
\end{theorem}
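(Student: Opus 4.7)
The plan is straightforward: combine equation (\ref{eq1}), $D=\sum_{i=1}^{d} iA_i$, with the polynomial expression for $A_i$ given in Theorem \ref{theoremai}, and reorganize by powers of $A$ to obtain $D=p(A)$. Since every $A_i$ is a polynomial in $A$, the matrix $p(A)$ commutes with $A$ and is simultaneously diagonalized by the same eigenbasis; hence each adjacency eigenvalue $\lambda$ yields a distance eigenvalue $p(\lambda)$ with the same algebraic multiplicity.

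First I would dispatch the two odd-girth cases allowed by Lemma \ref{lexist} (after ruling out $k=2$). For $g=3$ we have $d=1$, so $D=A_1=A$ and $p(x)=x$. For $g=5$ we have $d=2$; equation (\ref{eq3}) with $a_2^1=k$ gives $A_2=A^2-kI$, hence $D=A+2(A^2-kI)$ and $p(x)=2x^2+x-2k$. These match the two exceptional lines in the statement.

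For the even-girth case, write $D=\sum_{i=1}^{d-1} iA_i + dA_d$ and substitute Theorem \ref{theoremai}, remembering that only $A_d$ carries the prefactor $1/k$. The coefficient of $A^i$ in the first sum, for $0\le i\le d-1$, picks up the leading contribution $i$ from $iA^i$ together with the term $(-1)^{j}(i+2j)(k-1)^{j-1}a_{i+2j}^{j}$ coming from the $A^i$ summand inside $A_{i+2j}$; this happens precisely when $1\le j\le \floor{\frac{d-1-i}{2}}$, which is non-empty exactly when $d-i\ge 3$. The term $\tfrac{d}{k}A_d$ contributes the remaining bracket. Assembling these two collections of coefficients reproduces the polynomial displayed in the statement.

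The main obstacle is purely bookkeeping: matching the upper limit $\floor{\frac{d-i-1}{2}}$, the cut-off condition $d-i\ge 3$, and keeping track of the $1/k$ prefactor that appears only on the $A_d$ summand. Once the indices are lined up correctly, the eigenvalue and multiplicity assertions follow immediately from the general fact that any polynomial in $A$ preserves $A$'s eigenspaces.
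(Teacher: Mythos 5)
Your proposal is correct and follows essentially the same route as the paper: substitute the expressions for $A_i$ from Theorem \ref{theoremai} into $D=\sum_{i=1}^{d}iA_i$, keep the $1/k$ prefactor only on the $A_d$ term, collect coefficients of powers of $A$ (your index bookkeeping for the range $1\le j\le\floor{\frac{d-i-1}{2}}$ and the cutoff $d-i\ge 3$ matches the paper's, which merely spells out the $d$ even and $d$ odd subcases separately), and handle $g=3,5$ directly via Lemma \ref{lexist}. The final appeal to the fact that a polynomial in $A$ has eigenvalues $p(\lambda)$ with matching multiplicities is exactly the paper's closing step.
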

\begin{proof} Here we represent $D(G)$ as a polynomial, $p(A)$, of the adjacency matrix $A$ of $G$ and then the theorem follows from a basic result that " If $\lambda$ is an eigenvalue of $A$ then $p(\lambda)$ is an eigenvalue of $p(A)$ with the same multiplicity as that of $\lambda$."
First, we consider that $g$ is an even integer.
From Theorem \ref{theoremai} and equation (\ref{eq1}), the distance matrix $D$ of a minimal $(k,g)$-cage $G$ can be written as below:
\begin{equation*}
\begin{split}
D&=A_{1}+2A_{2}+3A_3\cdots+d A_{d}\\
&=A+2(A^{2}-a_{2}^{1}I)+3(A^{3}-a_{3}^{1} A)+\cdots+(d-1)[A^{d-1}-a_{d-1}^{1} A^{d-3}+\cdots+\\
&(-1)^{\floor{\frac{d-1}{2}}}
(k-1)^{{\floor{\frac{d-1}{2}}}-1} a_{d-1}^{{\floor{\frac{d-1}{2}}}}A^{d-1-2({\floor{\frac{d-1}{2}}})}]+\frac{d}{k}[A^{d}-a_{d}^{1} A^{d-2}+\cdots+(-1)^{\floor{\frac{d}{2}}}\\
&(k-1)^{{\floor{\frac{d}{2}}-1}}a_{d}^{\floor{\frac{d}{2}}}A^{d-2{\floor{\frac{d}{2}}}}]
\end{split}
\end{equation*}
If $d$ is even then $\floor{\frac{d-i-2}{2}}=\floor{\frac{d-i-1}{2}}$ for even $i$ and $\floor{\frac{d-i}{2}}=\floor{\frac{d-i-1}{2}}$ for odd $i$, $i=1,\ldots,d-1$. So we get,
\begin{align*}
D&=[-2a_{2}^{1}+4(k-1)a_{4}^{2}-\cdots+(-1)^{\floor{\frac{d-2}{2}}}(d-2)(k-1)^{\floor{\frac{d-2}{2}}-1}a_{d-2}^{\floor{\frac{d-2}{2}}}]+[1-
3a_{3}^{1}\\
&+\cdots+(-1)^{\floor{\frac{d-1}{2}}}
(d-1)(k-1)^{{\floor{\frac{d-1}{2}}-1}}a_{d-1}^{\floor{\frac{d-1}{2}}}]A+\cdots+[(d-3)-(d-1)a_{d-1}^{1}]\\
&A^{d-3}+(d-2)A^{d-2}+(d-1)A^{d-1}+
\frac{d}{k}[A^{d}-a_{d}^{1} A^{d-2}+(k-1)a_{d}^{2} A^{d-4}-\cdots+\\
&(-1)^{\floor{\frac{d}{2}}}(k-1)^{{\floor{\frac{d}{2}}-1}}a_{d}^{\floor{\frac{d}{2}}}A^{d-2{\floor{\frac{d}{2}}}}]
\end{align*}
\begin{align*}
=&\sum\limits_{i=0}^{d-1} [i+\sum\limits_{\substack{j=1\\ d-i\geq 3 }}^{\ceil{\frac{d-i-1}{2}}} (-1)^{j} (i+2j)(k-1)^{j-1} a_{i+2j}^{j}]A^{i}+\frac{d}{k}[A^{d}+ \sum\limits_{i=1}^{\floor{\frac{d}{2}}} (-1)^{i} (k-1)^{i-1}\\
&a_{d}^{i} A^{d-2i}]
\end{align*}
For $d$ odd, we get
\begin{equation*}
\begin{split}
D =&[-2a_{2}^{1}+4(k-1)a_{4}^{2}-\cdots+
(-1)^{\floor{\frac{d-1}{2}}}(d-1)(k-1)^{{\floor{\frac{d-1}{2}}-1}}a_{d-1}^{\floor{\frac{d-1}{2}}}]+[1-3\\
&a_{3}^{1}+\cdots+(-1)^{\floor{\frac{d-2}{2}}}(d-2)(k-1)^{{\floor{\frac{d-2}{2}}-1}}a_{d-2}^{\floor{\frac{d-2}{2}}}]A+\cdots+[(d-3)-(d-1)\\
&a_{d-1}^{1}]A^{d-3}+(d-2)A^{d-2}+(d-1)A^{d-1}+\frac{d}{k}[A^{d}-a_{d}^{1} A^{d-2}+\cdots+(-1)^{\floor{\frac{d}{2}}}\\
&(k-1)^{{\floor{\frac{d}{2}}-1}}a_{d}^{\floor{\frac{d}{2}}}A^{d-2{\floor{\frac{d}{2}}}}]\\
= &\sum\limits_{i=0}^{d-1} [i+\sum\limits_{\substack{j=1\\ d-i\geq 3 }}^{\floor{\frac{d-i-1}{2}}} (-1)^{j} (i+2j)(k-1)^{j-1} a_{i+2j}^{j}]A^{i} +\frac{d}{k}[A^{d}+ \sum\limits_{i=1}^{\floor{\frac{d}{2}}} (-1)^{i} \\
&(k-1)^{i-1} a_{d}^{i} A^{d-2i}]
\end{split}
\end{equation*} 
In the above, for both $d$ even and odd, $D$ is expressed as a polynomial of the adjacency matrix $A$ of $G$. We take this polynomial as $p(A)$ and obtain the result.\\

If $g$ is an odd integer then by Lemma \ref{lexist}, $g=3$ or $5$. For $g=3$ the value of $d$ is $1$. So,
 by equation (\ref{eq1}) we have $D=p(A)=A$, and so is the result.
 Now for $g=5$, the value of $d$ is $2$. Applying equation (\ref{eq1}) and Theorem \ref{theoremai} we have,
$D=A_{1}+2A_{2}=A+2(A^{2}-a_{2}^{1}I)=A+2(A^{2}-kI)=2A^{2}+A-2kI$,
  and hence the result.
\end{proof}
\begin{theorem} \label{theoremd+1}
Every minimal $(k,g)$-cage, $k\geq 2$, with diameter $d$ has $d+1$ distinct distance eigenvalues.
\end{theorem}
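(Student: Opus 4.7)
The plan is to combine Theorem~\ref{theorempoly} with Lemma~\ref{lspec}. The former gives $D=p(A)$ for an explicit polynomial $p$ of degree $d$, and the latter says that $A$ has exactly $d+1$ distinct eigenvalues $\lambda_{0},\ldots,\lambda_{d}$. Hence the distance spectrum is contained in $\{p(\lambda_{0}),\ldots,p(\lambda_{d})\}$, and it remains to show that $p$ is injective on the adjacency spectrum of $G$. I would split on the girth, invoking Lemma~\ref{lexist} to constrain the cases.

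For $g\in\{3,4,5\}$ one has $d\leq 2$, so $p$ is linear or quadratic and injectivity is a direct computation from Theorem~\ref{theorempoly}. When $g=3$, $p(x)=x$ is trivially injective on $\{k,-1\}$. When $g=4$ we have $G=K_{k,k}$ with spectrum $\{k,0,-k\}$, and $p(x)=\tfrac{2}{k}x^{2}+x-2$ takes the distinct values $3k-2,-2,k-2$. When $g=5$, $p(x)=2x^{2}+x-2k$; using that the two non-Perron adjacency eigenvalues $r,s$ are roots of $t^{2}-t-(k-1)$ (so $r+s=1$), one finds $p(r)-p(s)=3(r-s)\neq 0$, while $p(k)\neq p(r),p(s)$ follows from the relevant factors $2(k+r)+1$ and $2(k+s)+1$ being nonzero (the latter reducing to $4k^{2}+4k+7=0$, which has no real root).

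For $g$ even with $d\geq 3$, I would obtain a trigonometric closed form for $p(\lambda)$. Since $a_{i}=0$ and $b_{i}=k-1$ for $i\geq 1$, the polynomials $u_{j}$ with $u_{j}(A)=A_{j}$ for $j<d$ and $u_{d}(A)=kA_{d}$ (when $g$ is even) satisfy $u_{j+1}(x)=xu_{j}(x)-(k-1)u_{j-1}(x)$ for $j\geq 1$, with $u_{0}=1$, $u_{1}=x$, $u_{2}=x^{2}-k$, by (\ref{eq2}). Writing $\tau=\sqrt{k-1}$ and substituting $x=2\tau\cos\theta$, induction on $j$ yields
\begin{equation*}
u_{j}(2\tau\cos\theta)\sin\theta = \tau^{j}\sin((j+1)\theta)-\tau^{j-2}\sin((j-1)\theta), \qquad j\geq 1.
\end{equation*}
Since the intermediate eigenvalues are $\lambda_{m}=2\tau\cos\theta_{m}$ with $\theta_{m}=\pi m/d$, the identities $\sin(d\theta_{m})=0$ and $\sin((d\pm 1)\theta_{m})=\pm(-1)^{m}\sin\theta_{m}$ allow the sum $\sum_{j=1}^{d-1}ju_{j}(\lambda_{m})+\tfrac{d}{k}u_{d}(\lambda_{m})$ to telescope to
\begin{equation*}
p(\lambda_{m}) = -2+2(-1)^{m}\tau^{d-2}-2\sum_{j=1}^{d-3}\tau^{j}\frac{\sin((j+1)\theta_{m})}{\sin\theta_{m}}.
\end{equation*}
With this formula in hand, Perron--Frobenius on the positive irreducible matrix $D$ gives that $p(k)$ is simple and strictly maximal, disposing of every comparison involving $p(k)$ in one stroke. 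Distinctness of $p(-k)$ from each $p(\lambda_{m})$ follows from a magnitude estimate: $|p(-k)|=|\sum_{j}j(-1)^{j}k_{j}|$ is of order $k^{d-1}$, whereas each $|p(\lambda_{m})|$ is only of order $\tau^{d-2}=O(k^{(d-2)/2})$. Finally, distinctness among the $p(\lambda_{m})$ splits on the parity of $m$: when $m\not\equiv m'\pmod 2$ the two $(-1)^{m}\tau^{d-2}$ terms differ by $\pm 4\tau^{d-2}$, which dominates the residual trigonometric sum; when $m\equiv m'\pmod 2$ those terms cancel and one compares the polynomial $\sum_{j=1}^{d-3}\tau^{j}U_{j}(\cos\theta_{m})$ (with $U_{j}$ the Chebyshev polynomial of the second kind) of degree $d-3$ in $\cos\theta_{m}$.

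The main obstacle is precisely this same-parity step for the intermediate eigenvalues: one must rule out accidental coincidences of the trigonometric polynomial at two distinct values of $\cos\theta_{m}$. Fortunately, Lemma~\ref{lexist} restricts the even-girth case with $d\geq 3$ to $g\in\{6,8,12\}$, forcing $d\in\{3,4,6\}$; thus each same-parity class contains at most three values of $m$ and the verification reduces to an explicit finite check rather than a general identity, making the argument tractable.
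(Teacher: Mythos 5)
Your overall strategy---write $D=p(A)$ with $\deg p=d$ and show $p$ is injective on the $d+1$ distinct adjacency eigenvalues---is exactly the paper's, and your treatment of $g\in\{3,4,5\}$ is essentially the paper's argument (modulo a sign slip: for $g=5$ the non-Perron eigenvalues satisfy $t^{2}+t-(k-1)=0$, so $r+s=-1$ and $p(r)-p(s)=(r-s)(2(r+s)+1)=-(r-s)$, not $3(r-s)$; the conclusion survives). The genuine problem is in your even-girth, $d\geq 3$ step separating $p(-k)$ from the intermediate values $p(\lambda_{m})$. The claimed magnitude domination is false for small $k$, and $k=3$ actually occurs for each of $g=6,8,12$. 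Concretely, for the Heawood graph ($k=3$, $g=6$, $d=3$) one has $p(x)=\tfrac{3}{k}x^{3}+2x^{2}+\tfrac{3-5k}{k}x-2k$, so $p(-3)=-3$ while $p(\sqrt{2})=-2(1+\sqrt{2})\approx-4.83$: here $|p(-k)|<|p(\lambda_{1})|$, so the comparison you rely on proves nothing. The structural reason is that $p(-k)=\sum_{j}j(-1)^{j}k_{j}$ has its two top terms combine to $(-1)^{d}(k-d)(k-1)^{d-2}$, whose coefficient vanishes at $k=d$; so the asserted order $k^{d-1}$ is not uniform in $k$, and an asymptotic estimate cannot close a statement quantified over all admissible $k\geq 3$. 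The same caveat infects your final step: for fixed $d\in\{3,4,6\}$ the number of same-parity pairs $(m,m')$ is finite, but each comparison is a polynomial identity in $\tau=\sqrt{k-1}$ that must be shown to have no admissible root over the \emph{infinite} family of feasible $k$ (e.g.\ for $d=6$ the pair $m=2,m'=4$ gives difference $2\tau(2-k)$, which vanishes precisely at the excluded value $k=2$); this is doable but is not the ``explicit finite check'' you describe.

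For contrast, the paper avoids magnitude estimates entirely: for $g$ even with $d$ even it expands $k\,p(\lambda)$ as a polynomial in $k$ using Lemma \ref{lemmainteger}(ii) and equates the coefficients of the top power of $k$, which is \emph{linear} in $\lambda$ and so forces $\lambda_{i}=\lambda_{j}$; for the remaining even-girth case ($g=6$) it uses the explicit eigenvalues $\pm k,\pm\sqrt{k-1}$ and shows the sum condition $\lambda_{i}+\lambda_{j}=\tfrac{5}{2}$ extracted from the coefficient of $k$ leads to $4k^{2}+16k+29=0$, which has no integer solution. If you want to salvage your route, you must replace the asymptotic comparison of $p(-k)$ with $p(\lambda_{m})$ by an argument valid for every feasible $k$ (for instance, exploiting that $p(-k)$ is always an integer while treating the rational values $p(\lambda_{m})$ separately), and upgrade the same-parity step to an explicit root analysis of the relevant polynomials in $\tau$.
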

\begin{proof}
For $k=2$ the minimal $(k,g)$-cages are cycles, and the result follows by \cite{Grao}.\\
\noindent
\textbf{Case 1.} In this case we consider that $g$ is an even integer. For $d$ even, applying Lemma \ref{lemmainteger} and Theorem \ref{theorempoly} the distance matrix of the minimal $(k,g)$-cage can be written as,
\begin{align*}
D=&p(A)=A+2(A^{2}-a_{2}^{1}I)+\cdots+(d-2)
[A^{d-2}- a_{d-2}^{1} A^{d-4}+\cdots +(-1)^{\floor{\frac{d-2}{2}}-1}\\
&(k-1)^{\floor{\frac{d-2}{2}}-2}a_{d-2}^{\floor{\frac{d-2}{2}}-1}A^{d-2-2({\floor{\frac{d-2}{2}}-1})}+(-1)^{\floor{\frac{d-2}{2}}}
(k-1)^{{\floor{\frac{d-2}{2}}}-1} a_{d-2}^{{\floor{\frac{d-2}{2}}}}\\
& A^{d-2-2({\floor{\frac{d-2}{2}}})}]+(d-1)
[A^{d-1}- a_{d-1}^{1} A^{d-3}+\cdots +(-1)^{\floor{\frac{d-1}{2}}-1}
(k-1)^{\floor{\frac{d-1}{2}}-2}\\
&a_{d-1}^{\floor{\frac{d-1}{2}}-1}A^{d-1-2({\floor{\frac{d-1}{2}}-1})}+(-1)^{\floor{\frac{d-1}{2}}}
(k-1)^{{\floor{\frac{d-1}{2}}}-1} a_{d-1}^{{\floor{\frac{d-1}{2}}}} A^{d-1-2({\floor{\frac{d-1}{2}}})}]+\\
&\frac{d}{k}[A^{d}-a_{d}^{1} A^{d-2}+\cdots+(-1)^{\floor{\frac{d}{2}}-1}(k-1)^{\floor{\frac{d}{2}}-2}a_{d}^{\floor{\frac{d}{2}}-1}A^{d-2({\floor{\frac{d}{2}}-1})}+(-1)^{\floor{\frac{d}{2}}}\\
&(k-1)^{{\floor{\frac{d}{2}}-1}}a_{d}^{\floor{\frac{d}{2}}}A^{d-2{\floor{\frac{d}{2}}}}]\\
=&A+2(A^{2}-k I)+\cdots+(d-2)
[A^{d-2}-(g_{d-2}^{1}k-h_{d-2}^{1}) A^{d-4}+\cdots +(-1)^{\floor{\frac{d-2}{2}}}\\ 
& k\sum_{i=0}^{\floor{\frac{d-2}{2}}-1}(-1)^{i}\binom{\floor{\frac{d-2}{2}}-1}{i} k^{\floor{\frac{d-2}{2}}-1-i} ]+(d-1)[A^{d-1}- (g_{d-1}^{1}k-h_{d-1}^{1})A^{d-3}\\ &+\cdots +(-1)^{\floor{\frac{d-1}{2}}}\sum_{i=0}^{\floor{\frac{d-1}{2}}-1} (-1)^{i}\binom{\floor{\frac{d-1}{2}}-1}{i} k^{\floor{\frac{d-1}{2}}-1-i} (g_{d-1}^{{\floor{\frac{d-1}{2}}}}k-h_{d-1}^{{\floor{\frac{d-1}{2}}}}) A]+\\
&\frac{d}{k}[A^{d}-(g_{d}^{1}k-h_{d}^{1}) A^{d-2}+\cdots+(-1)^{\floor{\frac{d}{2}}}k \sum_{i=0}^{\floor{\frac{d}{2}}-1}(-1)^{i}\binom{\floor{\frac{d}{2}}-1}{i} k^{\floor{\frac{d}{2}}-1-i} ]\\
=&\frac{1}{k}\big[d\big\{A^{d}+h_{d}^{1}A^{d-2}+\cdots+h_{d}^{\floor{\frac{d}{2}}-1} A^{2}\big\}+\big\{A+2A^{2}+3(A^{3}+h_{3}^{1}A)+\cdots+(d\\ &-1)(A^{d-1}+h_{d-1}^{1}A^{d-3}+\cdots+h_{d-1}^{\floor{\frac{d-1}{2}}}A)-d(g_{d}^{1}A^{d-2}+(g_{d}^{2}+h_{d}^{2})A^{d-4}+(g_{d}^{3}\\ 
&+2h_{d}^{3})A^{d-6}+\cdots+(g_{d}^{\floor{\frac{d}{2}}-1}
+(\floor{\frac{d}{2}}-1)h_{d}^{\floor{\frac{d}{2}}-1})A^{2}+1)\big\}k+\cdots+(-1)^{\floor{\frac{d}{2}}-1}\\ &\{(d-2)+g_{d-1}^{\floor{\frac{d-1}{2}}}(d-1)A-d\}k^{\floor{\frac{d}{2}}}\big],
\end{align*}
since for $d$ even ${{\floor{\frac{d}{2}}}}-1={{\floor{\frac{d-1}{2}}}}={{\floor{\frac{d-2}{2}}}}$. If possible let there be two distinct eigenvalues $\lambda_{i}\neq \lambda_{j}$ of the minimal $(k,g)$-cage such that $p(\lambda_{i})=p(\lambda_{j})$. This implies $k p(\lambda_{i})=k p(\lambda_{j})$ (since $k\neq 0$). Now equating the coefficients of $k^{{\floor{\frac{d}{2}}}}$, we get $(d-2)+(d-1)g_{d-1}^{\floor{\frac{d-1}{2}}}\lambda_{i}-d=(d-2)+(d-1)g_{d-1}^{\floor{\frac{d-1}{2}}}\lambda_{j}-d$, which gives $\lambda_{i}=\lambda_{j}$, a contradiction. This proves that a minimal $(k,g)$-cage has $d+1$ distinct distance eigenvalues for $d$ and $g$ both even.\\

\vspace{-1em}
If $d$ is odd and $g$ is even then by Lemma \ref{lexist} we get that $g$ is equal to $6$. By Lemma \ref{lspec} all distinct eigenvalues of the minimal $(k,6)$-cage are $\pm k, \pm \sqrt{k-1}$. Then the distance matrix of the minimal $(k,6)$-cage can be written as $D=p(A)=\frac{3}{k}A^{3}+2A^{2}+\frac{3-5k}{k}A-2k I$. If possible let there exist two distinct eigenvalues $\lambda_{i}\neq \lambda_{j}$ of minimal $(k,6)$-cage such that $p(\lambda_{i})=p(\lambda_{j})$. Then $k p(\lambda_{i})=k p(\lambda_{j})$ (since $k\neq 0$). This implies 
$-2k^{2}+(2 \lambda_{i}^{2}-5 \lambda_{i})k+3(\lambda_{i}^{3}+\lambda_{i})=-2k^{2}+(2 \lambda_{j}^{2}-5 \lambda_{j})k+3(\lambda_{j}^{3}+\lambda_{j})$.
Equating the coefficients of $k$ we get
$2(\lambda_{i}^{2}-\lambda_{j}^{2})-5(\lambda_{i}-\lambda_{j})=0$, and then $\lambda_{i}+\lambda_{j}=\frac{5}{2}$. Since $k$ is the largest adjacency eigenvalue, $p(k)$ is the largest distance eigenvalue \cite{Ala}. Thus both $\lambda_{i}$ and $\lambda_{j}$ are different from $k$. Then $\lambda_{i},\lambda_{j} \in \{-k, \pm \sqrt{k-1}\}$. Now
 $\lambda_{i}+\lambda_{j}=-k\pm\sqrt{k-1}=\frac{5}{2}$, that is $4k^{2}+16k+29=0$. But this equation does not give any integer solution and since $k$ is an integer, this leads to a contradiction. Hence a minimal $(k,6)$-cage has $4$ distinct distance eigenvalues.\\
 
 \vspace{-1em}
\textbf{Case 2.} In this case we consider that $g$ is an odd integer. A minimal $(k,3)$-cage is a complete graph $K_{n}$ and its distinct distance eigenvalues are $n-1, -1$. Now for $g=5$, all distinct eigenvalues of the minimal $(k,5)$-cage are $k, \frac{-1\pm \sqrt{4k-3}}{2}$. Then from Theorem \ref{theorempoly} the distance matrix of this graph can be written as $D=p(A)=2A^{2}+A-2kI$.
 If possible let there exist two distinct eigenvalues $\lambda_{i}\neq \lambda_{j}$ of minimal $(k,5)$-cage such that $p(\lambda_{i})=p(\lambda_{j})$. This implies $2(\lambda_{i}^{2}-\lambda_{j}^{2})+(\lambda_{i}-\lambda_{j})=0$, and then $\lambda_{i}+\lambda_{j}=-\frac{1}{2}$.
Since $k$ is the largest adjacency eigenvalue, $p(k)$ is the largest distance eigenvalue \cite{Ala}. Thus both $\lambda_{i}$ and $\lambda_{j}$ are different from $k$. Then $\lambda_{i},\lambda_{j}\in \{\frac{-1\pm \sqrt{4k-3}}{2}\}$ and $\lambda_{i}+\lambda_{j}=-1$, a contradiction. Thus a minimal $(k,5)$-cage has $3$ distinct distance eigenvalues.
This proves the theorem.
\end{proof}
 \remark \rm
  Theorem \ref{theoremd+1} supplies a class of graphs to the answer of the problem "Characterize distance regular graphs with diameter $d$ and having exactly $d+1$ distinct $D$-eigenvalues", asked by Atik and Panigrahi \cite{Atik1}.\\
 
 \vspace{-1em}
  A minimal $(k,3)$-cage is  a complete graph and its distance spectrum is mentioned in Theorem \ref{theoremd+1}. Minimal $(k,4)$-cages and $(2,g)$-cages are complete bipartite graphs and cycles respectively, and their distance spectrum can be found in \cite{Stev} and \cite{Grao}. So in the theorem below we present the distance spectrum of minimal $(k,g)$-cages, $k\geq 3$ and $g \geq 5$, by applying Lemma \ref{lspec} and Theorems \ref{theoremai} and \ref{theorempoly}. 
  
  \begin{theorem} \begin{enumerate}
    \item The distance matrix of a minimal $(k,5)$-cage is $D=-2k I+A+2A^{2}$.
    \begin{enumerate}
        \item The distance matrix of the minimal $(3,5)$-cage (Petersen graph) is $D=-6I+A+2A^{2}$, and its distance spectrum is $\{15,-3^{(5)},0^{(4)}\}$. 
        \item The distance matrix of the minimal $(7,5)$-cage (Hoffman-Singleton graph) is $D=-14I+A+2A^{2}$, and its  distance spectrum is $\{91, -4^{(28)}, 1^{(21)}\}$.
        \item The distance matrix of the minimal $(57,5)$-cage (if exists) is $D=-114I+A+2A^{2}$, and its distance spectrum is $\{6441, -9^{(1729)}, 6^{(1520)}\}$.
    \end{enumerate}
    \item The distance matrix of a minimal $(k,6)$-cage is $D=\frac{3}{k} A^{3}+2A^{2}-\frac{5k-3}{k} A-2k I$, and its distance spectrum is $\{5k^2-7k+3, -k^2+3k-3, (-2(1+\sqrt{k-1}))^{(m_{\sqrt{k-1}})}, (-2(1-\sqrt{k-1}))^{(m_{-\sqrt{k-1}})}\}$, where $m_{\pm \sqrt{k-1}}=\frac{n k(k-1)}{2 (k^2-k+1)}$.\\
    (Heawood graph is a minimal $(3,6)$-cage and its distance matrix is $D=A^{3}+2A^{2}-4A-6I$, and its distance spectrum is $\{27, -3,  (-2(1+\sqrt{2}))^{(6)},\\ (-2(1-\sqrt{2}))^{(6)}\}$).
    \item The distance matrix of a minimal $(k,8)$-cage is $D=\frac{4}{k} A^{4}+3A^{3}+\frac{8-10k}{k} A^{2}-(6k-4)A+(2k-4) I$, and its distance spectrum is $\{7k^3-16k^2+14k-4, k^3-4k^2+6k-4, (2k-4)^{(m_{0})}, (-2(k+\sqrt{2(k-1)}))^{(m_{\sqrt{2(k-1)}})}, (-2(k-\sqrt{2(k-1)}))^{(m_{-\sqrt{2(k-1)}})} \}$, where $m_{0}=\frac{n(k-1)}{2k}$, $m_{\pm \sqrt{2(k-1)}}=\frac{n k(k-1)}{4(k^2-2k+2)}$.\\
    ( Levi graph is a minimal $(3,8)$-cage and its distance matrix is $D=\frac{4}{3} A^{4}+3A^{3}-\frac{22}{3} A^{2}-14A+2 I$, and its distance spectrum is $\{83, 5,2^{(10)},\\ -10^{(9)}, -2^{(9)}\}$).
    \item The distance matrix of a minimal $(k,12)$-cage is $D=\frac{6}{k} A^{6}+5A^{5}+\frac{24-26k}{k} A^{4}-(20k-18)A^{3}+\frac{24k^{2}-44k+18}{k}A^{2}+(15k^{2}-26k+9)A-(2 k^{2}-6k+6) I$, and its distance spectrum is $\{11k^5-46 k^4+81k^3-72 k^2+33k-6,~k^5-6k^4+15 k^3-20 k^2+15k-6,~(-2k^2+6k-6)^{(m_{0})},~(2(k-2)(k+\sqrt{k-1}))^{(m_{\sqrt{k-1}})},~(2(k-2)(k-\sqrt{k-1}))^{(m_{-\sqrt{k-1}})},~(-2k(k+\\\sqrt{3(k-1)}))^{(m_{\sqrt{3(k-1)}})},(-2k(k-\sqrt{3(k-1)}))^{(m_{-\sqrt{3(k-1)}})} \}$, where $m_{0}=\frac{n(k-1)}{3k}$, $m_{\pm \sqrt{k-1}}=\frac{n k(k-1)}{4(k^2-k+1)}$, and $m_{\pm\sqrt{3(k-1)}}=\frac{n k(k-1)}{12(k^2-3k+3)}$.
\end{enumerate}
\end{theorem}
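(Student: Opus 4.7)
The plan is to apply Theorem~\ref{theoremai} and Theorem~\ref{theorempoly} case-by-case for $g \in \{5,6,8,12\}$ to write $D(G)=p(A)$ as a concrete polynomial in the adjacency matrix $A$, and then evaluate $p$ on the adjacency eigenvalues supplied by Lemma~\ref{lspec} to read off the full distance spectrum with correct multiplicities.

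First I would use the recurrence in Lemma~\ref{lemmainteger} to compute the integers $a_{i}^{j}$ needed for each $d=\floor{g/2}$: only $a_{2}^{1}=k$ for $g=5$; additionally $a_{3}^{1}=2k-1$ for $g=6$; a short extension for $g=8$; and the complete table of $a_{i}^{j}$ with $i\le 6$, $j\le\floor{i/2}$ for $g=12$. Substituting these into equation~(\ref{eq3}) gives each $A_{i}$ as a polynomial in $A$, remembering the extra factor $1/k$ that Theorem~\ref{theoremai} attaches to $A_{d}$ when $g$ is even. Summing $D=\sum_{i=1}^{d}iA_{i}$ and collecting powers of $A$ then produces exactly the polynomial expression claimed in each item; the specifically named cages (Petersen, Hoffman--Singleton, Heawood, Levi, and the hypothetical $(57,5)$-cage) reduce to substituting numerical values of $k$ into the generic formula.

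Once $D=p(A)$ is established, every distance eigenvalue is $p(\lambda)$ for some adjacency eigenvalue $\lambda$, with the same multiplicity. Lemma~\ref{lspec} supplies the adjacency eigenvalues: for $g=5$ they are $k$ and $(-1\pm\sqrt{4k-3})/2$; for $g=6$ they are $\pm k$ and $\pm\sqrt{k-1}$; for $g=8$ they are $\pm k$, $\pm\sqrt{2(k-1)}$ and $0$; and for $g=12$ they are $\pm k$, $\pm\sqrt{k-1}$, $\pm\sqrt{3(k-1)}$ and $0$, obtained by specializing $2\sqrt{k-1}\cos(\pi j/d)$ at $j=1,\dots,d-1$. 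Evaluating $p$ at each of these produces the claimed distance eigenvalues; for instance when $g=6$ a direct computation gives $p(\pm\sqrt{k-1})=-2(1\pm\sqrt{k-1})$, and when $g=8$ one checks $p(0)=2k-4$. The multiplicities carry over verbatim from the formula $m_{\lambda}=\frac{nk}{g}\cdot\frac{4(k-1)-\lambda^{2}}{k^{2}-\lambda^{2}}$ in Lemma~\ref{lspec}.

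The main obstacle is strictly computational rather than conceptual: for $g=12$ the polynomial $p(A)$ has degree $6$ with coefficients that mix polynomials in $k$ with the factor $1/k$, and it must be evaluated at the irrational values $\pm\sqrt{k-1}$, $\pm\sqrt{3(k-1)}$, and at $0$, while keeping the closed-form multiplicities organized. Pairing the odd-degree and even-degree terms of $p$ before substituting $\pm\lambda$ (so that the radicals combine cleanly) is what makes the arithmetic tractable, but the bookkeeping — especially checking that the $1/k$ factor cancels to leave the stated integer-polynomial eigenvalues — is where the care is required.
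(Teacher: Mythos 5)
Your proposal is correct and follows essentially the same route as the paper, which obtains this theorem precisely by applying Lemma~\ref{lspec} together with Theorems~\ref{theoremai} and~\ref{theorempoly}: compute the $a_i^j$ from Lemma~\ref{lemmainteger}, express $D=p(A)$, and evaluate $p$ at the adjacency eigenvalues with their multiplicities. Your spot checks (e.g.\ $p(\pm\sqrt{k-1})=-2(1\pm\sqrt{k-1})$ for $g=6$ and $p(0)=2k-4$ for $g=8$) agree with the stated spectra, so the only remaining work is the routine bookkeeping you already identify for $g=12$.
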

\section{Distance spectrum of some distance biregular graphs}
In the next theorem we show that every DBR graph is a $2$-partitioned transmission regular graph.

\begin{theorem} \label{theoremdbr}
All distance biregular (DBR) graphs are $2$-partitioned transmission regular graphs.
\end{theorem}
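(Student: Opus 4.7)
The plan is to take the bipartition $\{V_1,V_2\}$ itself as the 2-partition and verify the four constancy conditions $q_{ij}$ directly, by grouping the distance sum from a fixed vertex according to distance classes and then splitting by parity. The only structural fact I will need, beyond the DBR hypothesis, is that a bipartite graph puts every vertex at even distance from $x$ into the same part as $x$ and every vertex at odd distance into the opposite part.

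First I would record the consequence of the DBR definition that was already set up just before Lemma \ref{ldbr}: for every $x \in V_1$ one has $|G_i(x)| = l_i$, a constant depending only on the intersection array for $V_1$ and not on the choice of $x$, and symmetrically $|G_i(y)| = l_i'$ for every $y \in V_2$. This is where the hypothesis does all the work; after it, the remaining argument is just bookkeeping.

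Next, using bipartiteness, for any $x \in V_1$ I would write
\[
\sum_{y \in V_1} d(x,y) \;=\; \sum_{\substack{0 \leq i \leq d' \\ i \text{ even}}} i\, l_i,
\qquad
\sum_{y \in V_2} d(x,y) \;=\; \sum_{\substack{0 \leq i \leq d' \\ i \text{ odd}}} i\, l_i,
\]
and symmetrically, for any $x \in V_2$,
\[
\sum_{y \in V_1} d(x,y) \;=\; \sum_{\substack{0 \leq i \leq d' \\ i \text{ odd}}} i\, l_i',
\qquad
\sum_{y \in V_2} d(x,y) \;=\; \sum_{\substack{0 \leq i \leq d' \\ i \text{ even}}} i\, l_i'.
\]
All four right-hand sides depend only on the part and not on the chosen vertex, so setting $U_1 = V_1$, $U_2 = V_2$ and reading off $q_{11}, q_{12}, q_{21}, q_{22}$ from the above expressions furnishes the 2-partitioned transmission regular structure required by the definition.

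There is no real obstacle here; the theorem is essentially a repackaging of the DBR condition once one uses bipartiteness to sort vertices into the two parts by parity of distance. The only point worth being explicit about in the write-up is the parity split of the sums, since that is what converts the single ``constant cardinality of $G_i(x)$'' statement into the two separate constants $q_{i1}$ and $q_{i2}$ that the definition of a $2$-partitioned transmission regular graph demands.
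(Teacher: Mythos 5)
Your proposal is correct and follows essentially the same route as the paper: take the bipartition itself as the $2$-partition, invoke the constancy of $|G_i(x)|$ supplied by the DBR hypothesis, and split each transmission sum by parity of distance using bipartiteness. (Incidentally, your parity assignment for the row indexed by $V_2$ is the right one; the paper's displayed formulas for $q_{21}$ and $q_{22}$ have the even and odd sums transposed, although the subsequent computations in Theorem \ref{theoremdsr} use the correct version.)
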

\begin{proof}
Let $G$ be a DBR graph with partite sets $V_{1}$ and $V_{2}$. 
Each vertex $u$ in $V_{1}$ has $l_{i}$, a constant, number of vertices at distance $i$. From $u$, even distance vertices are situated in $V_{1}$ and odd distance vertices are in $V_{2}$. Thus the number of vertices of even and odd distances is constant from each vertex $u\in V_{1}$.  So we get, $q_{11}=\sum\limits_{v\in V_{1}} d(u,v)=\sum\limits_{i=0}^{\floor{\frac{d_{1}}{2}}} 2i l_{2i}$ and $q_{12}=\sum\limits_{v\in V_{2}} d(u,v)=\sum\limits_{i=0}^{\floor{\frac{d_{1}}{2}}} (2i+1) l_{2i+1}$ are constants, where $d_{1}=$max $\{d(x,y):x\in V_{1},y\in V(G)\}$. Similarly the sum of distances from each $w \in V_{2}$, $q_{21}=\sum\limits_{v\in V_{1}} d(w,v)=\sum\limits_{i=0}^{\floor{\frac{d_{2}}{2}}} 2i l_{2i}^{\prime}$ and $q_{22}=\sum\limits_{v\in V_{2}} d(w,v)=\sum\limits_{i=0}^{\floor{\frac{d_{2}}{2}}} (2i+1) l_{2i+1}^{\prime}$ are constants, where $l_{i}^{\prime}$ is the number of vertices at distance $i$ from $w$ and $d_{2}=$max $\{d(x,y):x\in V_{2},y\in V(G)\}$. Hence the result.
\end{proof}
The larger root of the  quotient matrix $Q=
 \begin{pmatrix}
 q_{11} & q_{12}\\
 q_{21} & q_{22}
 \end{pmatrix}
 $ is the distance spectral radius of any DBR graphs by Lemma \ref{q2}.
The subdivision graph $S(G)$ of a minimal $(k,g)$-cage $G$ is a DBR graph \cite{Mohar}. So applying Theorem \ref{theoremdbr} we determine distance spectral radius of these graphs. Unless otherwise stated, in the remaining of the paper $(V_{1},V_{2})$ is taken as the vertex partition of subdivision of a minimal $(k,g)$-cage $G$, where $V_{1}=V(G)$ and $V_{2}$ is the set of all new vertices inserted on edges of $G$.

\begin{theorem} \label{theoremdsr}
Let $g$ be an even integer and $G$ be a minimal $(k,g)$-cage  with diameter $d$. The distance spectral radius of the subdivision graph $S(G)$ is\\
 $(3k-2)S_{1}^{\prime}+d k(k-1)^{d-1}+\\
 \sqrt{(k-2)^2 S_{1}^{\prime 2}+ 2k S_{ 2}^{\prime 2}+2d (k-2)^{2} (k-1)^{d-1} S_{1}^{\prime}+d^{2} (k-2)^{2} (k-1)^{2d-2}}$, where $S_{1}^{\prime}= \frac{1}{(2-k)^{2}} 
 [(k d-2d-k+1)(k-1)^{d-1}+1]$ and $S_{2}^{\prime}=\frac{1}{(2-k)^{2}} [(2k d-4d-k)(k-1)^{d}+k]$.
\end{theorem}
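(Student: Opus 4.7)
The plan is to apply Theorem~\ref{theoremdbr}, which says $S(G)$ is a $2$-partitioned transmission regular graph with partition $(V_{1},V_{2})$. The quotient matrix of $D(S(G))$ is then the $2\times 2$ matrix $Q^{D}=\begin{pmatrix}q_{11}&q_{12}\\ q_{21}&q_{22}\end{pmatrix}$, and by Lemma~\ref{q2} the distance spectral radius of $S(G)$ is its larger eigenvalue $\frac{q_{11}+q_{22}}{2}+\sqrt{\big(\tfrac{q_{11}-q_{22}}{2}\big)^{2}+q_{12}q_{21}}$. So the task is to compute the four entries $q_{ij}$ explicitly and then match them to the stated form.

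The entries $q_{11}$, $q_{12}$, $q_{21}$ are straightforward. For any $u\in V_{1}$, each edge of $G$ becomes a $2$-path in $S(G)$, so $d_{S(G)}(u,v)=2\,d_{G}(u,v)$ for every $v\in V_{1}$; hence $q_{11}=2p$, where $p$ is the transmission of any vertex of the minimal $(k,g)$-cage (closed form given in the first theorem of Section~2). A short manipulation of that closed form shows $p=S_{2}'$. For $q_{12}$, a subdivision vertex sitting on an edge between $G_{i}(u)$ and $G_{i+1}(u)$ lies at distance $2i+1$ from $u$ in $S(G)$, and the intersection array of $G$ gives $k_{i}b_{i}=k(k-1)^{i}$ such edges; summing $\sum_{i=0}^{d-1}(2i+1)k(k-1)^{i}$ via standard geometric-series identities yields $q_{12}=kS_{2}'$. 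For $q_{21}$, let $w\in V_{2}$ sit on the edge $(x,y)$; since $g$ is even, $G$ is bipartite and $|d_{G}(x,v)-d_{G}(y,v)|=1$ for every $v\in V_{1}$, so $d_{S(G)}(w,v)=2\min(d_{G}(x,v),d_{G}(y,v))+1=d_{G}(x,v)+d_{G}(y,v)$. Summing over $v$ and using that both vertex transmissions equal $p$ gives $q_{21}=2p=q_{11}$.

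The main obstacle is $q_{22}$. For $w\neq w'$ in $V_{2}$ on edges $e,e'$ of $G$, a short case analysis gives $d_{S(G)}(w,w')=2+2\,d_{G}(e,e')$, where $d_{G}(e,e')=\min\{d_{G}(a,b):a\in e,\,b\in e'\}$; hence $q_{22}=2(m-1)+2\sum_{e'\neq e}d_{G}(e,e')$ with $m=nk/2$. To evaluate the sum I introduce the level sets $B_{j}=\{v\in V(G):\min(d_{G}(x,v),d_{G}(y,v))=j\}$ for $0\leq j\leq d-1$. The key inputs are: (i) since $a_{i}=0$ and $c_{i}=1$ for $1\leq i\leq d-1$ in the intersection array, the distance layers from $x$ and from $y$ are tree-like; and (ii) the girth condition $g=2d$ forbids any cycle of length less than $2d$ through $e$, which translates into the statement that no vertex of $B_{j}$ is adjacent to another vertex of $B_{j}$ for $1\leq j\leq d-2$. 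Together these yield $|B_{j}|=2(k-1)^{j}$ for $0\leq j\leq d-1$ and let me enumerate the edges by their distance from $e$: setting $E_{j}=|\{e'\neq e:d_{G}(e,e')=j\}|$, I obtain $E_{0}=2(k-1)$, $E_{j}=2(k-1)^{j+1}$ for $1\leq j\leq d-2$, and $E_{d-1}=(k-1)^{d}$. A geometric-sum simplification of $2(m-1)+2\sum_{j\geq 0}jE_{j}$ then gives $q_{22}=2(3k-2)S_{1}'+2dk(k-1)^{d-1}-2S_{2}'$.

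Finally I substitute into the eigenvalue formula. From $q_{11}=q_{21}=2S_{2}'$ and $q_{12}=kS_{2}'$, one gets $\tfrac{q_{11}+q_{22}}{2}=(3k-2)S_{1}'+dk(k-1)^{d-1}$ and $q_{12}q_{21}=2kS_{2}^{\prime 2}$. The identity $S_{2}'=kS_{1}'+d(k-1)^{d-1}$, obtained by direct manipulation of the closed forms defining $S_{1}'$ and $S_{2}'$, rewrites $\big(\tfrac{q_{11}-q_{22}}{2}\big)^{2}=(k-2)^{2}\big(S_{1}'+d(k-1)^{d-1}\big)^{2}=(k-2)^{2}S_{1}^{\prime 2}+2d(k-2)^{2}(k-1)^{d-1}S_{1}'+d^{2}(k-2)^{2}(k-1)^{2d-2}$, which is exactly the remaining content of the radicand in the theorem statement. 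The hardest step is the edge-distance enumeration; once $|B_{j}|$ and the $E_{j}$ are in hand, everything else is routine geometric-series manipulation.
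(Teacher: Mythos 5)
Your proposal is correct and lands on exactly the same quotient matrix as the paper, so the overall strategy coincides: both proofs invoke Theorem~\ref{theoremdbr} to get the equitable $2$-partition $(V_1,V_2)$ of $D(S(G))$ and then take the larger eigenvalue of the $2\times 2$ quotient matrix via Lemma~\ref{q2}. Where you differ is in how the four entries $q_{ij}$ are obtained. The paper simply quotes the DBR intersection arrays of $S(G)$ from Mohar--Shawe-Taylor, reads off the level-set sizes $l_i$ and $l_i'$ via Lemma~\ref{ldbr}, and sums $\sum 2i\,l_{2i}$, $\sum(2i+1)l_{2i+1}$, etc.; in particular $q_{22}=\sum_{i=1}^{d}2i\,l_{2i}'=4(k-1)S_1'+2d(k-1)^d$ comes for free. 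You instead derive everything from first principles inside $G$: $q_{11}=q_{21}=2p=2S_2'$ via the bipartiteness identity $d_{S(G)}(w,v)=d_G(x,v)+d_G(y,v)$, $q_{12}=kS_2'$ via the layer-edge counts $k_ib_i$, and $q_{22}$ via the edge-distance enumeration $E_0=2(k-1)$, $E_j=2(k-1)^{j+1}$ ($1\le j\le d-2$), $E_{d-1}=(k-1)^d$, which is precisely $l_{2(j+1)}'$ in disguise (an edge at distance $j$ from $e$ corresponds to a subdivision vertex at distance $2j+2$ from $w$), so your $2(m-1)+2\sum_j jE_j$ reproduces the paper's $q_{22}$; I checked that your closed form $2(3k-2)S_1'+2dk(k-1)^{d-1}-2S_2'$ agrees with $4(k-1)S_1'+2d(k-1)^d$ under the identity $S_2'=kS_1'+d(k-1)^{d-1}$, which you also use to put the radicand in the stated form. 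The trade-off: the paper's route is shorter because it outsources the combinatorics to the cited intersection arrays, while yours is self-contained but must carry the burden of justifying the $|B_j|$ and $E_j$ counts from the girth and the Moore conditions $a_i=0$, $c_i=1$ --- which you do correctly, this being the only genuinely delicate point in your argument.
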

\begin{proof}
We know that \cite{Mohar} the intersection array of any vertex $u\in V_{1}$ is
$\{k,1 , k-1 , 1 , \ldots , k-1 ,1;1 , 1 , 1 , 1,\ldots, 1 , k\}$ and the intersection array of any vertex  $v\in V_{2}$ is $\{2 , k-1 , 1 , k-1 , \cdots, 1 ,k-1;1 , 1 , 1 , 1, \cdots , 1 , 2\}$. Since $S(G)$ is obtained by inserting a new vertex in every edge and diameter of $G$ is $d=\floor{\frac{g}{2}}$, applying Theorem \ref{theoremdbr} we have $l_{0}=1$, $l_{2i}=l_{2i-1}=k(k-1)^{i-1}$, $i=1,2,\ldots,d-1$, $l_{2d-1}= k(k-1)^{d-1}$, $l_{2d}= (k-1)^{d-1}$, $l_{1}^{\prime}=2$, $l_{2i}^{\prime}=l_{2i+1}^{\prime}=2(k-1)^{i}$, $i=1,2,\ldots,d-1$, and $l_{2d}^{\prime}=(k-1)^{d}$.
Thus \begin{align*}
     q_{11}&=\sum_{i=0}^{d} 2i l_{2i}=0+2k+4k(k-1)+\cdots+(2d-2)k(k-1)^{d-2}+2d (k-1)^{d-1}
     \\
   &=2k[1+2(k-1)+3(k-1)^{2}+\cdots+(d-1)(k-1)^{d-2}]+2d(k-1)^{d-1}\\
   &=2 k S_{1}^{\prime}+2d(k-1)^{d-1},
   \end{align*}
   
  \newpage 
where $S_{1}^{\prime}=1+2(k-1)+3(k-1)^{2}+\cdots+(d-1)(k-1)^{d-2}=\frac{1}{(2-k)^{2}} [(k d-2d-k+1)(k-1)^{d-1}+1]$.
\begin{align*}
q_{12}=&\sum_{i=0}^{d-1} (2i+1) l_{2i+1} =k+3k(k-1)+5k(k-1)^{2}+\cdots+(2d-1) k(k-1)^{d-1}\\
 =&k[1+3(k-1)+5(k-1)^{2}+\cdots+(2d-1)(k-1)^{d-1}]=k S_{2}^{\prime},
 \end{align*}
 where
 $S_{2}^{\prime}=1+3(k-1)+5(k-1)^{2}+\cdots+(2d-1)(k-1)^{d-1}=\frac{1}{(2-k)^{2}} [(2k d-4d-k)(k-1)^{d}+k]$.
 \begin{align*}
q_{21}=&\sum_{i=0}^{d-1} (2i+1) l_{2i+1}^{\prime}=2+3\times 2(k-1)+5\times 2(k-1)^{2}+\cdots+(2d-1)\times 2\\
&(k-1)^{d-1}= 2 S_{2}^{\prime}.\\
q_{22}=&\sum_{i=0}^{d} 2i l_{2i}^{\prime}=2 \times 2(k-1)+4\times 2(k-1)^{2}+6 \times 2(k-1)^{3}+\cdots+(2d-2)\\
&\times 2(k-1)^{d-1}+2d\times (k-1)^{d}=4(k-1) S_{1}^{\prime}+2d (k-1)^{d}.
 \end{align*}
 Thus $Q=
 \begin{pmatrix}
 q_{11} & q_{12}\\
 q_{21} & q_{22}
 \end{pmatrix}
 $ is a quotient matrix of the distance matrix of $S(G)$ when $g$ is even. The characteristic polynomial of $Q$ is $x^{2}-\{2(3k-2)S_{1}^{\prime}+2d k(k-1)^{d-1}\}x+8k(k-1)S_{1}^{^{\prime}2}-2k S_{2}^{\prime 2}+4d (k+2)(k-1)^{d} S_{1}^{\prime}+4 d^{2} (k-1)^{2d-1}=0$, and its larger root
 $(3k-2)S_{1}^{\prime}+d k(k-1)^{d-1}+\\
 \sqrt{(k-2)^2 S_{1}^{\prime 2}+ 2k S_{ 2}^{\prime 2}+2 d (k-2)^{2} (k-1)^{d-1} S_{1}^{\prime}+d^{2} (k-2)^{2} (k-1)^{2d-2}}$, where $S_{1}^{\prime}=\frac{1}{(2-k)^{2}}[(k d-2d-k+1)(k-1)^{d-1}+1]$ and $S_{2}^{\prime}=\frac{1}{(2-k)^{2}} [(2k d-4d-k)(k-1)^{d}+k]$ is the distance spectral radius of $S(G)$ by Lemma \ref{q2}.
\end{proof}
By Lemma \ref{lexist}, if $g$ is an odd integer then minimal $(k,g)$-cages exist only for $g=3$ and $5$. So in the next theorem we determine distance spectral radius of subdivision of minimal $(k,g)$-cages for these two cases only. 
\begin{theorem} \label{theoremgodd}
Let $g$ be an odd integer and $G$ be a minimal $(k,g)$-cage. The distance spectral radius of the subdivision graph $S(G)$ is,\\ 
{\small{$\lambda_{1}(S(G))=
 \begin{cases}
 \frac{1}{2} [2k^{2}+
 \sqrt{2k(2k+1)(k^{2}+1)}], &\text{if $g=3$}\\
 \frac{1}{2} [(3k^{3}+k-2)+
 \sqrt{9k^{6}+2k^{5}+14k^{4}-40k^{3}+41k^{2}-18k+4}], &\text{if $g=5$}
 \end{cases}$}}
\end{theorem}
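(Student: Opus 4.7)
The plan is to follow the template of Theorem~\ref{theoremdsr}. Since the subdivision of a minimal cage is DBR \cite{Mohar}, Theorem~\ref{theoremdbr} makes $S(G)$ a $2$-partitioned transmission regular graph with respect to the bi-partition $(V_1, V_2)$, so by Lemma~\ref{q2} its distance spectral radius is the larger root of the $2 \times 2$ quotient matrix $Q=\begin{pmatrix} q_{11} & q_{12} \\ q_{21} & q_{22} \end{pmatrix}$. Thus in each of the two cases $g=3$ and $g=5$, the work is to (a) read off the distance distributions from $u \in V_1$ and from $v \in V_2$ in $S(G)$, (b) assemble the four numbers $q_{ij}$ using the formulas from Theorem~\ref{theoremdbr}, and (c) take the larger root of the resulting quadratic characteristic polynomial.

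For $g=3$, the cage is $K_{k+1}$ and $S(G)$ has diameter $4$. Direct inspection gives, from $u \in V_1$, $l_0 = 1,\ l_1 = k,\ l_2 = k,\ l_3 = \binom{k}{2}$, and, from a midpoint $v \in V_2$ placed on an edge $\{u_1, u_2\}$, $l_0' = 1,\ l_1' = 2,\ l_2' = 2(k-1),\ l_3' = k-1,\ l_4' = \binom{k-1}{2}$. For $g=5$, $G$ is a Moore graph of diameter $2$ on $k^{2}+1$ vertices and $S(G)$ has diameter $6$; invoking the girth-$5$ property (any two non-adjacent vertices of $G$ have a unique common neighbour) yields, from $u \in V_1$, $(l_0,\ldots,l_5) = \left(1,\, k,\, k,\, k(k-1),\, k(k-1),\, \frac{k(k-1)^2}{2}\right)$, and, from $v \in V_2$, $(l_0',\ldots,l_6') = \left(1,\, 2,\, 2(k-1),\, 2(k-1),\, 2(k-1)^2,\, (k-1)^2,\, \frac{(k-1)^2(k-2)}{2}\right)$. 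Feeding these counts into $q_{11} = \sum_i 2i\, l_{2i}$, $q_{12} = \sum_i (2i+1)\, l_{2i+1}$, $q_{21} = \sum_i (2i+1)\, l_{2i+1}'$, $q_{22} = \sum_i 2i\, l_{2i}'$ produces an explicit $2\times 2$ matrix whose larger eigenvalue, after routine algebraic simplification, matches the stated closed form.

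The main bookkeeping obstacle is the $g=5$ case: from a midpoint $v \in V_2$ one must correctly separate the $2(k-1)$ middle-tier vertices of $G$ (the neighbours of $u_1$ or $u_2$ other than $u_1, u_2$ themselves, at distance $3$ from $v$) from the remaining $(k-1)^2$ outer-tier original vertices (at distance $5$), and analogously split the non-incident subdivision vertices between those sitting on edges joining the middle and outer tiers (distance $4$, a count of $2(k-1)^2$) and those on edges lying entirely in the outer tier (distance $6$, a count of $\frac{(k-1)^2(k-2)}{2}$). All of these counts are forced by the Moore property of $G$, and in particular by the absence of triangles and $4$-cycles; once they are verified, the remaining step is an elementary computation of the characteristic polynomial of $Q$, which is quadratic and yields the announced radical.
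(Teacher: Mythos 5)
Your proposal is correct and follows essentially the same route as the paper: the paper likewise treats $S(G)$ as a $2$-partitioned transmission regular graph via Theorem \ref{theoremdbr}, reads off the same distance distribution counts $l_i$ and $l_i'$ (quoting the intersection arrays of \cite{Mohar} and Lemma \ref{ldbr} rather than deriving them from the Moore property, but arriving at identical values), and extracts the larger root of the same $2\times 2$ quotient matrix by Lemma \ref{q2}. No substantive difference.
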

\begin{proof}
First, let $g$ be equal to $3$.
We know that \cite{Mohar} intersection array of any vertex in $V_1$ is
$\{k,1 , k-1;1 , 1, 2\}$ and the intersection array of any vertex in $V_{2}$ is $\{2 , k-1 , 1 , k-2;1 , 1 , 2 , 2\}$. By Lemma \ref{ldbr} we have, $l_{0}=1$, $l_{1}=l_{2}=k$, $l_{3}=\frac{1}{2}k(k-1)$, $l_{0}^{\prime}=1$, $l_{1}^{\prime}=2$, $l_{2}^{\prime}=2(k-1)$, $l_{3}^{\prime}=(k-1)$, and $l_{4}^{\prime}=\frac{1}{2}(k-1)(k-2)$.
Thus $q_{11}=2k$, $q_{12}=k+\frac{3}{2}k(k-1)=\frac{1}{2}k(3k-1)$, $q_{21}=2+3(k-1)=3k-1$, $q_{22}=2\times 2(k-1)+4\times \frac{1}{2}(k-1)(k-2)=2k(k-1)$.\\
So $Q=
 \begin{pmatrix}
 q_{11} & q_{12}\\
 q_{21} & q_{22}
 \end{pmatrix}
 $ is a quotient matrix of the distance matrix of $S(G)$. The characteristic polynomial of $Q$ is $x^{2}-2k^{2}x-\frac{1}{2}k (k+1)^{2}=0$, and its larger root $\frac{1}{2} [2k^{2}+
 \sqrt{2k(2k+1)(k^{2}+1)}]$ is the distance spectral radius of $S(G)$ by Lemma \ref{q2}.\\
Next we take $g=5$. Intersection array of any vertex in $ V_1$ is
$\{k,1,k-1,1,k-1;1,1,1,1, 2\}$ and the intersection array of any vertex in $V_2$ is $\{2,k-1,1,k-1,1,k-2;1,1,1,1, 2,2\}$. By Lemma \ref{ldbr} we have, $l_{0}=1$, $l_{1}=l_{2}=k$, $l_{3}=l_{4}=k(k-1)$, $l_{5}=\frac{1}{2}k(k-1)^{2}$, $l_{0}^{\prime}=1$, $l_{1}^{\prime}=2$, $l_{2}^{\prime}=l_{3}^{\prime}=2(k-1)$, $l_{4}^{\prime}=2(k-1)^{2}$, $l_{5}^{\prime}=(k-1)^{2}$, and $l_{6}^{\prime}=\frac{1}{2}(k-1)^{2}(k-2)$. Thus $q_{11}=2k+4k(k-1)=2k(2k-1)$, $q_{12}=k+3k(k-1)+\frac{5}{2}k(k-1)^{2}=\frac{1}{2}k(5k^{2}-4k+1)$, $q_{21}=2+3\times 2(k-1)+5\times(k-1)^{2}=(5k^{2}-4k+1)$, $q_{22}=2\times 2(k-1)+4\times 2(k-1)^{2}+6\times \frac{1}{2}(k-1)^{2}(k-2)=(k-1)(3k^{2}-k+2)$.\\
So $Q=
 \begin{pmatrix}
 q_{11} & q_{12}\\
 q_{21} & q_{22}
 \end{pmatrix}
 $ is a quotient matrix of the distance matrix of $S(G)$. The characteristic polynomial of $Q$ is $x^{2}-(3k^{3}+k-2)x-\frac{1}{2}k (k^{4}+4k^{3}-14k^{2}+20k-7)=0$, and its larger root\\ $\frac{1}{2} [(3k^{3}+k-2)+
 \sqrt{9k^{6}+2k^{5}+14k^{4}-40k^{3}+41k^{2}-18k+4}]$ is the distance spectral radius of $S(G)$ by Lemma \ref{q2}.
\end{proof}

\example \rm
We know that the Heawood graph is the minimal $(3,6)$-cage. By Theorem \ref{theoremdsr}, $Q=\begin{pmatrix}
54 & 81\\
54 & 88
\end{pmatrix}$ is a quotient matrix of the distance matrix of subdivision of Heawood graph, and its characteristic polynomial is $x^{2}-142x+378$. So the distance spectral radius of subdivision of Heawood graph is $71+\sqrt{4663}$. We also compute the distance characteristic polynomial of subdivision of Heawood graph, which is $(x^{2}-142x+378)(x+2)^{8}(x+6)(x^{4}+20x^{3}-60x^{2}-80x+112)^{6}$. So the D-spectrum of subdivision of Heawood graph is the union of $\{71\pm \sqrt{4663}, -2^{(8)}, -6\}$ and the set of roots of the polynomial $(x^{4}+20 x^{3}-60 x^{2}-80 x+112)^{6}$.

\example \rm
We know that the Petersen graph is the minimal $(3,5)$-cage. By Theorem \ref{theoremgodd}, $Q=\begin{pmatrix}
30 & 51\\
34 & 52
\end{pmatrix}$ is a quotient matrix of the distance matrix of subdivision of Petersen graph, and its characteristic polynomial is $x^{2}-82x-174$. So the distance spectral radius of subdivision of Petersen graph is $41+\sqrt{1855}$. We also compute the distance characteristic polynomial of subdivision of Petersen graph, which is $(x^{2}-82x-174)(x^{2}+16x-4)^{5}(x^{2}-2x-4)^{4}(x+2)^{5}$. So the D-spectrum of subdivision of minimal $(3,5)$-cage is
 $\{41\pm \sqrt{1855},(-8\pm 2\sqrt{17})^{(5)}, (1\pm \sqrt{5})^{(4)}, -2^{(5)}\}$.\\

\vspace{-1em}
In the next two theorems we find distance spectrum of subdivision of minimal $(k,3)$-cages (complete graphs $K_{k+1}$) and $(k,4)$-cages (complete bipartite graphs $K_{k,k}$). We denote the $m\times n$ all one matrix by $J_{m\times n}$ (or simply by $J$ if its order is clear from the context) and an $n$-dimensional all one vector by $1_{n}$.
\begin{theorem}
The distance spectrum of subdivision of a minimal $(k,3)$-cage is $\{k^{2}\pm \sqrt{\frac{1}{2}k(k^2 +1)(2k+1)},~(-2k)^{(k)},0^{({k+1\choose 2}-1)}\}$.
\end{theorem}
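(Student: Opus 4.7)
The plan is to diagonalize the distance matrix $D$ of $S(K_{k+1})$ block-wise with respect to the bipartition $V_1 \cup V_2$, where $|V_1|=k+1$ and $|V_2|=\binom{k+1}{2}$. Since any two distinct vertices of $V_1$ are at distance $2$, we have $D_{11}=2(J-I)$. Writing $R$ for the $(k+1)\times\binom{k+1}{2}$ incidence matrix of $K_{k+1}$, the distance from $v_i\in V_1$ to $u_{jl}\in V_2$ is $1$ if $i\in\{j,l\}$ and $3$ otherwise, so $D_{12}=R+3(J-R)=3J-2R$. Two edges of $K_{k+1}$ are at distance $2$ in $S(K_{k+1})$ when they share an endpoint and at distance $4$ otherwise, which gives $D_{22}=4J-4I-2A(L(K_{k+1}))$ using $A(L(K_{k+1}))=R^TR-2I$ from Lemma~\ref{l1}. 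Theorem~\ref{theoremgodd} already supplies the two quotient-matrix eigenvalues $k^2\pm\sqrt{\tfrac12 k(k^2+1)(2k+1)}$.

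Next I would search for the remaining $\binom{k+1}{2}+k-1$ eigenvectors inside the orthogonal complement of the equitable partition, i.e., among vectors $(x,y)$ with $x\perp 1_{k+1}$ and $y\perp 1_{\binom{k+1}{2}}$. For the $(0,y)$-slice, the condition $D_{12}^Tx+D_{22}y=\lambda y$ together with $(3J-2R)\text{-type}$ computations shows that any $y\in\ker R$ automatically satisfies $1^Ty=0$ (since $1^TR=2\cdot 1^T$) and gives $D_{22}y=4Jy-4y-2A(L)y=-4y-2(-2)y=0$, while $D_{12}y=-2Ry=0$; hence $(0,y)$ is a $0$-eigenvector. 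By Lemma~\ref{l2}, $\dim\ker R=\binom{k+1}{2}-(k+1)$, accounting for that many zero eigenvalues.

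For the remaining $2k$ eigenvalues I would use the $-1$-eigenspace $W\subseteq\mathbb{R}^{V_1}$ of $A(K_{k+1})$ (dimension $k$, consisting of vectors with zero sum). For $v\in W$, the vector $R^Tv$ lies in the $(k-3)$-eigenspace of $L(K_{k+1})$ because $R^TR=A(L)+2I$ and $RR^T=A(K_{k+1})+kI$ acts on $W$ as the scalar $k-1$. Trying the ansatz $(x,y)=(\alpha v, R^Tv)$ and using $D_{11}v=-2v$, $D_{12}R^Tv=-2(k-1)v$, $D_{21}v=-2R^Tv$, $D_{22}R^Tv=-2(k-1)R^Tv$ reduces the eigenequation $D(x,y)=\lambda(x,y)$ to the scalar system
\[
-2\alpha-2(k-1)=\lambda\alpha,\qquad -2\alpha-2(k-1)=\lambda,
\]
whose simultaneous solutions are $\lambda=-2k$ (with $\alpha=1$) and $\lambda=0$ (with $\alpha=-(k-1)$). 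Each of the $k$ linearly independent choices of $v\in W$ thus produces one eigenvector for $-2k$ and one for $0$, yielding multiplicity $k$ for $-2k$ and an additional $k$ for $0$.

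Finally I would total the multiplicities: $2+\bigl(\binom{k+1}{2}-k-1\bigr)+k+k=k+1+\binom{k+1}{2}$, which equals $|V(S(K_{k+1}))|$, so the spectrum is complete. The only delicate step is verifying that the three families of eigenvectors are mutually orthogonal and span, but this follows automatically from the orthogonality of $W$, $\ker R$, and $R^T W$ against the constant vectors that generate the quotient eigenspace; the bulk of the argument is routine block matrix algebra using Lemmas~\ref{l1} and~\ref{l2}.
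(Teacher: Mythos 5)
Your proposal is correct and follows essentially the same route as the paper: the same block form of $D(S(K_{k+1}))$, the same identities $RR^{T}=A+kI$ and $R^{T}R=A(L)+2I$, eigenvectors of the shape $(v,R^{T}v)$ built from the $-1$-eigenspace of $A(K_{k+1})$, and the quotient matrix of the equitable bipartition for the remaining pair $k^{2}\pm\sqrt{\tfrac12 k(k^{2}+1)(2k+1)}$. The only (immaterial) difference is that you split the $0$-eigenspace into the pieces $\ker R$ and $(-(k-1)v,R^{T}v)$, whereas the paper captures all $\binom{k+1}{2}-1$ null vectors at once as $(RY,-Y)$ with $Y\perp 1$; the two parametrizations span the same space.
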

\begin{proof}
The block matrix representation of $D(S(K_{k+1}))$ with respect to the bipartition $V_1 \cup V_2$ of $S(K_{k+1})$ is given by\\
{\small{$D(S(K_{k+1}))=\begin{bmatrix}
2(J_{k+1 \times k+1}-I_{k+1 \times k+1}) & 3J_{k+1 \times {k+1\choose 2}}-2R_{k+1 \times {k+1\choose 2}}\\
3J_{{k+1\choose 2}\times k+1}-2R_{{k+1\choose 2}\times k+1}^{T} & 4J_{{k+1\choose 2} \times {k+1\choose 2}}-2R_{{k+1\choose 2} \times {k+1}}^{T} R_{k+1 \times {k+1\choose 2}}\\
\end{bmatrix}$}}, where $J$ is the all one matrix and $R$ is the incidence matrix of $K_{k+1}$.
The adjacency spectrum of $K_{k+1}$ is $\{k, -1^{(k)}\}$.
Let $X$ be an eigenvector of $A(K_{k+1})$ corresponding to the eigenvalue $-1$. So $X$ is orthogonal to the all one vector. Also by Lemma \ref{l1}, $RR^{T}=A(K_{k+1})+kI$. Thus
 \[\begin{bmatrix}
2(J-I) & 3J-2R\\
3J-2R^{T} & 4J-2R^{T} R\\
\end{bmatrix} \begin{bmatrix}
X\\
R^{T} X
\end{bmatrix}
=\begin{bmatrix}
-2X-2RR^{T}X\\
-2R^{T}X-2R^{T} RR^{T}X\\
\end{bmatrix}\]
\[=\begin{bmatrix}
-2X-2(k-1)X\\
-2R^{T}X-2(k-1)R^{T}X\\
\end{bmatrix} 
=-2k\begin{bmatrix}
X\\
R^{T}X\\
\end{bmatrix}.\]\\
So $-2k$ is an eigenvalue of $D(S(K_{k+1}))$ with multiplicity $k$.\\
Let $Y$ be an eigenvector of $J_{{k+1\choose 2}\times {k+1 \choose 2}}$ corresponding to the eigenvalue $0$ with multiplicity ${k+1 \choose 2}-1$. Then $Y$ is orthogonal to the all one vector.
 Now \[\begin{bmatrix}
2(J-I) & 3J-2R\\
3J-2R^{T} & 4J-2R^{T} R\\
\end{bmatrix} \begin{bmatrix}
RY\\
-Y\\
\end{bmatrix}
=\begin{bmatrix}
-2RY+2RY\\
-2R^{T}RY+2R^{T}RY\\
\end{bmatrix} 
=0\begin{bmatrix}
RY\\
-Y\\
\end{bmatrix}.\]\\
 So $0$ is an eigenvalue of $D(S(K_{k+1}))$ with multiplicity ${k+1\choose 2}-1$. Now the eigenvectors $\begin{bmatrix}
X\\
R^{T}X
\end{bmatrix}$ and $\begin{bmatrix}
RY\\
-Y
\end{bmatrix}$ of $D(S(K_{k+1}))$ are orthogonal to $\begin{bmatrix}
1_{k+1}\\
0
\end{bmatrix}$ and $\begin{bmatrix}
0\\
1_{k+1\choose 2}
\end{bmatrix}$ respectively. So every other eigenvector $Z$ of $D(S(K_{k+1}))$ is of the form $\begin{bmatrix}
a 1_{k+1}\\
b 1_{{k+1 \choose 2}}
\end{bmatrix}$, $a,b\neq 0$. Now $D(S(K_{k+1}))Z=\lambda Z$ implies,\begin{align*}
    &2k a + \frac{1}{2} k (3k-1)b=\lambda a,\\
    &(3k-1)a + 2 k(k-1) b=\lambda b
\end{align*}
Since $a,b \neq 0$, solving the above equations we get $\lambda^{2}-2k^{2}\lambda-\frac{1}{2}k (k+1)^{2}=0$, and hence the result.
\end{proof}
\begin{theorem}
  The distance spectrum of subdivision of a minimal $(k,4)$-cage is
$\{2k^2 +k-2 \pm \sqrt{(4k^4 - 2k^3 + 9k^2 - 12k + 4)}, 2k-4,~ 0^{((k-1)^{2})},(-(k+2)+ \sqrt{k^{2}+4})^{(2k-2)}, (-(k+2)- \sqrt{k^{2}+4})^{(2k-2)}\}$.
\end{theorem}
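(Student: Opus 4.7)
The plan is to mimic the strategy used for the $(k,3)$-cage subdivision: write $D(S(K_{k,k}))$ as a $2\times 2$ block matrix in terms of $A=A(K_{k,k})$ and $R=R(K_{k,k})$, and then exhibit eigenvectors of $D$ in four families, keyed to the spectrum $\{k,\,-k,\,0^{(2k-2)}\}$ of $A$ together with $\ker R$. Using that two original vertices are at distance $2$ if adjacent in $K_{k,k}$ and $4$ if not, that an original vertex $v$ and an edge vertex $e$ are at distance $1$ or $3$ according to incidence, and that two edge vertices are at distance $2$ or $4$ according to whether they share an endpoint, the distance matrix takes the clean form
\[
D(S(K_{k,k}))=\begin{pmatrix}4J-4I-2A & 3J-2R\\ 3J-2R^{T} & 4J-2R^{T}R\end{pmatrix}.
\]

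Next, the two eigenvalues $2k^{2}+k-2\pm\sqrt{4k^{4}-2k^{3}+9k^{2}-12k+4}$ are produced by the equitable bipartition $(V_1,V_2)$: I would compute $q_{11}=6k-4,\ q_{12}=3k^{2}-2k,\ q_{21}=6k-4,\ q_{22}=4k^{2}-4k$ from the distance distributions of a vertex in each class and diagonalize the $2\times 2$ quotient matrix; Lemma \ref{q2} supplies the spectral radius and its partner eigenvalue. The eigenvalue $2k-4$ arises from the ansatz $(X,0)^{T}$ with $X$ the $(-k)$-eigenvector of $A$: because $R^{T}X=0$ (as $RR^{T}=A+kI$ forces $\|R^{T}X\|^{2}=0$), one checks $D(X,0)^{T}=(-4-2(-k))(X,0)^{T}$.

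For the remaining eigenvalues I mimic the trick from the preceding theorem. For each $X$ in the $0$-eigenspace of $A$ (dimension $2k-2$), set $W_{c}=(X,\,cR^{T}X)^{T}$. Since $X\perp\mathbf{1}_{2k}$, we have $R^{T}X\perp\mathbf{1}_{k^{2}}$, so $JX=0$ and $JR^{T}X=0$; together with $RR^{T}X=kX$ this makes $DW_{c}$ again of the form $(\alpha X,\beta R^{T}X)$, and the proportionality condition reduces to the quadratic $kc^{2}+(2-k)c-1=0$, whose two roots produce the eigenvalue $-(k+2)\pm\sqrt{k^{2}+4}$ each with multiplicity $2k-2$. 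Finally, to get the eigenvalue $0$ with multiplicity $(k-1)^{2}$, I take $Y\in\ker R$; since the all-ones vector on edges lies in the row space of $R$ (its image under $R$ equals $2\cdot\mathbf{1}_{k^{2}}$), every $Y\in\ker R$ satisfies $Y\perp\mathbf{1}_{k^{2}}$, hence $JY=0=R^{T}RY$, and $(0,Y)^{T}$ is a $0$-eigenvector; the dimension of $\ker R$ is $k^{2}-\operatorname{rank}R=k^{2}-(2k-1)=(k-1)^{2}$.

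The main obstacle is verifying that these four families supply enough linearly independent eigenvectors to exhaust the spectrum, which I would handle by an orthogonality argument: the 3-partition eigenvectors live in the $3$-dimensional span of characteristic vectors of $A,B,V_{2}$; the $W_{c}$ vectors have top part in $V_{1}^{0}=\{X:\sum_{A}X=\sum_{B}X=0\}$ and bottom part in $\operatorname{Im}(R^{T})\cap\{\mathbf{1}_{k^{2}}\}^{\perp}$; the $(0,Y)$ vectors lie in $\{0\}\oplus\ker R=\{0\}\oplus\operatorname{Im}(R^{T})^{\perp}$. Pairwise orthogonality is immediate from these descriptions, and the counts add up to $3+(4k-4)+(k-1)^{2}=k^{2}+2k=|V(S(K_{k,k}))|$, so all eigenvalues are accounted for and the spectrum is as claimed.
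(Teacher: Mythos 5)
Your proposal is correct, and its two central eigenvector families coincide with the paper's: you read off $0^{((k-1)^{2})}$ from $\ker R$ (the paper reaches exactly the same space via the $(-2)$-eigenvectors of $L(K_{k,k})$ and Lemma \ref{l2}), and you obtain $-(k+2)\pm\sqrt{k^{2}+4}$, each with multiplicity $2k-2$, from the $0$-eigenspace of $A(K_{k,k})$ just as the paper does --- your condition $kc^{2}+(2-k)c-1=0$ is the paper's $\lambda^{2}+(2k+4)\lambda+4k=0$ in a different parameter. Where you genuinely diverge is in the remaining three eigenvalues and in the block bookkeeping. The paper splits $V(K_{k,k})$ into its two sides, writes $D(S(K_{k,k}))$ as a $3\times 3$ Kronecker-product block matrix, and extracts $2k-4$ and $2k^{2}+k-2\pm\sqrt{4k^{4}-2k^{3}+9k^{2}-12k+4}$ together as the three roots of the cubic coming from vectors constant on the three parts. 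You instead use the coarser vertices/edge-vertices equitable partition, whose quotient $\left(\begin{smallmatrix}6k-4 & 3k^{2}-2k\\ 6k-4 & 4k^{2}-4k\end{smallmatrix}\right)$ has trace $4k^{2}+2k-4$ and determinant $6k^{3}-16k^{2}+8k$ and hence yields exactly the two radical eigenvalues, and you recover $2k-4$ separately from the explicit $(-k)$-eigenvector $(1_{k},-1_{k})$ of $A(K_{k,k})$ via the ansatz $(X,0)^{T}$ together with $R^{T}X=0$; both computations check out (your quotient's trace and determinant match the quadratic factor $\mu^{2}-(4k^{2}+2k-4)\mu+6k^{3}-16k^{2}+8k$ of the paper's cubic). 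Your route trades the factorization of a cubic for one extra explicit eigenvector, which is arguably cleaner; the paper's route keeps all three of these eigenvalues inside a single equitable-partition computation. Your orthogonality and counting argument, $3+(4k-4)+(k-1)^{2}=k^{2}+2k$, is the same completeness check the paper performs implicitly. One tiny slip worth fixing: the reason $Y\in\ker R$ is orthogonal to $1_{k^{2}}$ is that $R^{T}1_{2k}=2\cdot 1_{k^{2}}$, so $1_{k^{2}}\in\operatorname{Im}(R^{T})=(\ker R)^{\perp}$; your parenthetical says ``its image under $R$'' where it should say $R^{T}$.
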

\begin{proof}
We take the vertex partition of $S(K_{k,k})$ as $V_{1} \cup V_{2} \cup V_{3}$, where $(V_{1},~V_{2})$ is bipartition of $K_{k,k}$ and $V_{3}$ is the set of all new vertices inserted on edges of $K_{k,k}$. The distance matrix of $S(K_{k,k})$ can be written as,
{\footnotesize{$D(S(K_{k,k}))=\begin{bmatrix}
4(J_{k\times k}-I_{k \times k}) & 2J_{k \times k} & (3J_{k \times k}-2I_{k \times k})\otimes 1_{k}^{T}\\
2J_{k \times k} & 4(J_{k \times k}-I_{k \times k}) & 1_{k}^{T} \otimes (3J_{k \times k}-2I_{k \times k})\\
(3J_{k \times k}-2I_{k \times k})\otimes 1_{k} & 1_{k}  \otimes (3J_{k \times k}-2I_{k \times k}) & 4(J_{{k^2 \times k^2}}- I_{{k^2 \times k^2}})-2A(L(K_{k,k}))_{k^{2} \times k^2}\\
\end{bmatrix}$.}}\\ 
Adjacency matrix $A(K_{k, k})$ and incidence matrix $R$ of $K_{k,k}$ are $\begin{bmatrix}
0 & J_{k\times k}\\
J_{k \times k} & 0\\
\end{bmatrix}$ and $\begin{bmatrix}
I_{k \times k}\otimes 1_{k}^{T}\\
1_{k}^{T}\otimes I_{k\times k}
\end{bmatrix}$ respectively. Thus $3J_{k \times k^{2}}-2R_{k\times k^{2}}=\begin{bmatrix}
(3J_{k\times k}-2I_{k \times k})\otimes 1_{k}^{T}\\
1_{k}^{T} \otimes (3J_{k\times k}-2I_{k \times k})
\end{bmatrix}$.
Let $X$ be an eigenvector of $A(L(K_{k,k}))$ corresponding to the eigenvalue $-2$ with multiplicity $(k-1)^{2}$. 
Applying Lemma \ref{l2} we have $RX=0$. So $X$ is orthogonal to the all one vector.
Now \[\begin{bmatrix}
4(J-I) & 2J & (3J-2I)\otimes 1_{k}^{T}\\
2J & 4(J-I) & 1_{k}^{T} \otimes (3J-2I)\\
(3J-2I)\otimes 1_{k} & 1_{k} \otimes (3J-2I) & 4(J- I)-2A(L(K_{k, k}))\\
\end{bmatrix} \begin{bmatrix}
0\\
0\\
X\\
\end{bmatrix}\]\\
$~~~~~~~~~~~~~~~=\begin{bmatrix}
((3J-2I)\otimes 1_{k}^{T})X\\
(1_{k}^{T} \otimes (3J-2I)) X\\
-4X+4X\\
\end{bmatrix}$=$\begin{bmatrix}
(3J-2R)X\\
0\\
\end{bmatrix}$
$=0\begin{bmatrix}
0\\
0\\
X\\
\end{bmatrix}$.\\
Thus $0$ is an eigenvalue of $D(S(K_{k,k}))$ with multiplicity $(k-1)^{2}$.\\
Let $Z$ be an eigenvector of $A(K_{{k, k}})$ corresponding to the eigenvalue $0$ with multiplicity $2k-2$. Also let $X^{\prime}$ and $Y$ be vectors orthogonal to the all one vector $1_{k}$. If $\begin{bmatrix}
X^{\prime}\\
Y\\
Z\\
\end{bmatrix}$ happens to be an eigenvector of $D(S(K_{k, k}))$ corresponding to an eigenvalue $\lambda$, then it must satisfy,
\[\begin{bmatrix}
4(J-I) & 2J & (3J-2I)\otimes 1_{k}^{T}\\
2J & 4(J-I) & 1_{k}^{T} \otimes (3J-2I)\\
(3J-2I)\otimes 1_{k} & 1_{k} \otimes (3J-2I) & 4J-2R^{T} R\\
\end{bmatrix} \begin{bmatrix}
X^{\prime}\\
Y\\
Z\\
\end{bmatrix}=\lambda\begin{bmatrix}
X^{\prime}\\
Y\\
Z\\
\end{bmatrix}.\]\\
This implies,
\begin{align*}
& -4X^{\prime}+((3J-2I)\otimes 1_{k}^{T})Z=\lambda X^{\prime}\\ 
&-4Y+(1_{k}^{T} \otimes (3J-2I))Z=\lambda Y\\
&\{(3J-2I)\otimes 1_{k}\}X^{\prime}+\{1_{k} \otimes (3J-2I)\}Y-2 R^{T} R Z =\lambda Z.
\end{align*} 
Let $W=\begin{bmatrix}
X^{\prime}\\
Y
\end{bmatrix}$. Combining the relations we get,
\begin{align*}
&-4W+(3J-2R)Z=\lambda Z,\\
& (3J^{T}-2R^{T})W-2R^{T}RZ=\lambda Z.
\end{align*}
From the first equation we get,
 $-4W-2RZ=\lambda Z,~ RZ=-\frac{1}{2} (\lambda+4)W$.
Applying Lemma \ref{l1} in the second equation we get
\begin{align*}
 &-2R R^{T}W-2R R^{T}RZ=\lambda RZ,~
 -2(A+k I)W-2(A+k I)RZ=\lambda RZ,\\
 &-2kW+k(\lambda+4)W=-\frac{1}{2}\lambda (\lambda+4)W.
\end{align*}
Thus $\lambda^{2}+(2k+4)\lambda+4k=0$. So $-(k+2)\pm \sqrt{k^{2}+4}$ are two eigenvalues of $D(S(K_{k, k}))$ with multiplicity $2k-2$. Now $\begin{bmatrix}
0\\
0\\
X
\end{bmatrix}$ is orthogonal to $\begin{bmatrix}
0\\
0\\
1_{k^{2}}
\end{bmatrix}$ and $\begin{bmatrix}
X^{\prime}\\
Y\\
Z
\end{bmatrix}$ is orthogonal to both $\begin{bmatrix}
1_{k}\\
0\\
0
\end{bmatrix}$ and $\begin{bmatrix}
0\\
1_{k}\\
0
\end{bmatrix}$. So every other eigenvector $U$ of $D(S(K_{k, k}))$ is of the form $\begin{bmatrix}
a1_{k}\\
b1_{k}\\
c1_{k^{2}}\\
\end{bmatrix}$, $a,b,c\neq 0$. Now $D(S(K_{k\times k}))U=\mu U$ implies,
\begin{align*}
   & 4(k-1)a+ 2 k b + k(3k-2) c=\mu a\\
   & 2k a+ 4(k-1)b+ k(3k-2) c=\mu b\\
   & (3k-2) a+ (3k-2)b+ 4k(k-1)c=\mu c.
\end{align*}
Since $a,b,c \neq 0$, solving the above equations we get
 $\mu^{3}- (4k^{2} + 4k - 8)\mu^{2} +( 14k^{3} - 28k^{2} - 8k + 16) \mu - (12k^{4} - 56k^{3}+ 80k^{2} -32k)=0 $. So $(\mu-2k+4)(\mu^{2}-(4k^{2}+2k-4)\mu+(6k^3-16k^2+8k)=0$, and hence the result.
\end{proof}
\remark \rm
Theorem \ref{theoremdsr} also gives that the distance spectral radius of subdivision of minimal $(k,4)$-cage is $2k^2 +k-2 + \sqrt{(4k^4 - 2k^3 + 9k^2 - 12k + 4)}$.

\section{\textbf{Concluding Remarks}}
It is known that a distance regular graph of diameter $d$ has exactly $d+1$ distinct eigenvalues. However this is not the case for distance eigenvalues. The authors in  \cite{Atik1} proved that every distance regular graph of diameter $d$ has at the most $d+1$ distinct distance eigenvalues and asked for characterization of distance regular graphs which have exactly $d+1$ distinct distance eigenvalues. In this paper we proved that all minimal cages have exactly $d+1$ distinct distance eigenvalues. We also found distance spectral radius of DBR graphs and determined the full distance spectrum for some DBR graphs associated with minimal $(k,g)$-cages. For the remaining, the following matrix representation of distance matrix of subdivision of a minimal $(k,g)$-cage $G$ may be useful. Here we consider $(V_{1},V_{2})$ as the vertex partition of subdivision of a minimal $(k,g)$-cage $G$, where $V_{1}=V(G)$ and $V_{2}$ is the set of all new vertices inserted on edges of $G$.\\ 

So for $g$ even, $D(S(G))=\begin{bmatrix}
2D(G) & \frac{1}{2}D(G) R(G)\\
\frac{1}{2}R(G)^{T} D(G)^{T} & 2D(L(G))\\.
\end{bmatrix}$, and
for $g$ odd,\\ $D(S(G))=\begin{bmatrix}
2D(G) & \frac{1}{2}D(G) R(G)+E\\
\frac{1}{2}R(G)^{T} D(G)^{T}+E^{T} & 2D(L(G))\\.
\end{bmatrix}$, where $R(G)$, $D(G)$ and $D(L(G))$ are incidence matrix, distance matrix and distance matrix of the line graph of $G$ respectively. $E$ is a matrix whose rows are indexed by vertices of $G$ and columns are indexed by vertices on $V_{2}$ and $(i,j)^{th}$ entry of $E$ is $1$ if $d(v_{i},u_{j})$= max $\{d(v,u):v\in V_{1}, u\in V_{2}\}$ and $0$ otherwise.

\section{Acknowledgement}
The first author is grateful to Council of Scientific and Industrial Research (CSIR), India [Grant number: $09/081(1283)/2016-EMR-I$], for funding the research.


\begin{thebibliography}{30}
\bibitem{Ala} A. Alazemi et al. "Distance-regular graphs with small number of distinct distance eigenvalues." \emph{Linear Algebra and its Applications} 531 (2017): 83-97.

\bibitem{Ali} G. Aalipour et al. "On the distance spectra of graphs." \emph{Linear Algebra and its Applications} 497 (2016): 66-87.

\bibitem{Aou} M. Aouchiche, and P. Hansen. "Distance spectra of graphs: A survey." \emph{Linear algebra and its applications} 458 (2014): 301-386.

\bibitem{Atik} F. Atik, and P. Panigrahi, "Distance Spectral Radius of Some k-partitioned Transmission Regular Graphs." \emph{Conference on Algorithms and Discrete Applied Mathematics.} Springer, Cham, 2016.

\bibitem{Atik1} F. Atik, and P. Panigrahi, "On the distance spectrum of distance regular graphs." \emph{Linear Algebra and its Applications} 478 (2015): 256-273.

\bibitem{Big} N. Biggs, N. L. Biggs, and B. Norman, "Algebraic graph theory." Vol. 67. \emph{Cambridge university press}, 1993.

\bibitem{Bro} A. E. Brouwer,W.H. Haemers, Spectra of Graphs. \emph{Springer}, New York (2011).

\bibitem{BroDR} A. E. Brouwer, A. M. Cohen, and A. Neumaier. "Distance-Regular Graphs." (1989).

\bibitem{Bala} K. Balasubramanian, "Computer generation of distance polynomials of graphs." \emph{Journal of Computational Chemistry} 11.7 (1990): 829-836.

\bibitem{Cve} D. Cvetkovic, M. Doob, and H. Sachs. "Spectra of Graphs-Theory and Application. 1980." \emph{Pure and Applied Mathematics} (1980).

\bibitem{Exo} G. Exoo, and R. Jajcay, "Dynamic cage survey." \emph{The electronic journal of combinatorics} (2012): DS16-July.

\bibitem{God} C.D. Godsil, and J. Shawe-Taylor. "Distance-regularised graphs are distance-regular or distance-biregular." \emph{Journal of Combinatorial Theory}, Series B 43.1 (1987): 14-24.

\bibitem{Gra} R.L. Graham, and H.O. Pollak, "On the addressing problem for loop switching." \emph{The Bell System Technical Journal} 50.8 (1971): 2495-2519.

\bibitem{Grao} A. Graovac, G. Jashari, and M. Strunje. "On the distance spectrum of a cycle." \emph{Aplikace matematiky} 30.4 (1985): 286-290.

\bibitem{Lin} H. Lin, et al. "A survey on distance spectra of graphs." \emph{Adv. Math.(China)} 50.01 (2021): 29-76.

\bibitem{Mohar} B. Mohar and J. Shawe-Taylor, "Distance-biregular graphs with 2-valent vertices and distance-regular line graphs." \emph{Journal of Combinatorial Theory, Series B} 38.3 (1985): 193-203.

\bibitem{Sc} K. Schacke, "On the kronecker product", Master's thesis, \textit{University of Waterloo} (2004).

\bibitem{Stev} D. Stevanović, and G. Indulal. "The distance spectrum and energy of the compositions of regular graphs." \emph{Applied mathematics letters} 22.7 (2009): 1136-1140.

\bibitem{van}ER. van Dam, JH. Koolen, and H. Tanaka, "Distance-Regular Graphs." \emph{The Electronic Journal of Combinatorics}: EJC (2016): 1-156.

\end{thebibliography}
\end{document}